\definecolor{rltblue}{rgb}{0,0,0.4}
\definecolor{drkred}{rgb}{0.6,0,0}
\definecolor{drkgreen}{rgb}{0,0.4,0}
\declaretheorem{theorem}
\declaretheorem[sibling=theorem]{lemma}
\declaretheorem[sibling=theorem]{proposition}
\declaretheorem[sibling=theorem]{definition}
\declaretheorem{question}
\DeclareMathOperator{\restrict}{\upharpoonright}
\renewcommand{\phi}{\varphi}
\newcommand{\bigwwedge}{%
  \mathop{
    \mathchoice{\bigwedge\mkern-15mu\bigwedge}
               {\bigwedge\mkern-12.5mu\bigwedge}
               {\bigwedge\mkern-12.5mu\bigwedge}
               {\bigwedge\mkern-11mu\bigwedge}
    }
}
\newcommand{\bigvvee}{%
  \mathop{
    \mathchoice{\bigvee\mkern-15mu\bigvee}
               {\bigvee\mkern-12.5mu\bigvee}
               {\bigvee\mkern-12.5mu\bigvee}
               {\bigvee\mkern-11mu\bigvee}
    }
}
\newmdtheoremenv[backgroundcolor=cyan]{theorem-prove}{Theorem}[theorem]
\newmdtheoremenv[backgroundcolor=cyan]{lemma-prove}{Lemma}[theorem]
\newmdtheoremenv[backgroundcolor=cyan]{proposition-prove}{Proposition}[theorem]
\newmdtheoremenv[backgroundcolor=yellow!40]{theorem-check}{Theorem}[theorem]
\newmdtheoremenv[backgroundcolor=yellow!40]{lemma-check}{Lemma}[theorem]
\newmdtheoremenv[backgroundcolor=yellow!40]{proposition-check}{Proposition}[theorem]
\def\hbar{{\bar{h}}}
\def\T{\mathcal{T}}
\newtheorem{thm}{Theorem}
\theoremstyle{remark}
\newtheorem{example}[thm]{Example}
\def\and{\mathrel{\&}}
\title{Hybrid Maximal Filter Spaces}
\author{David Gonzalez}
\address{Department of Mathematics, University of California, Berkeley}
\email{david\_gonzalez@berkeley.edu}
\begin{document}
\maketitle
\begin{abstract}
We introduce a new way of encoding general topology in second order arithmetic that we call hybrid maximal filter (hybrid MF) spaces.
This notion is a modification of the notion of a proper MF space introduced by Montalb\'an.
We justify the shift by showing that proper MF spaces are not able to code most topological spaces, while hybrid MF spaces can code any second countable MF space.
We then answer Montalb\'an's question about metrization of well-behaved MF spaces to this shifted context.
To be specific, we show that in stark contrast to the original MF formalization used by Mummert and Simpson, the metrization theorem can be proven for hybrid MF spaces within $\text{ACA}_0$ instead of needing $\Pi_2^1-\text{CA}_0$.
\end{abstract}

\section{Introduction}

In this article we explore the concept of general topology in second order arithmetic through a new lens.
Because a topology is naturally described as a third order object, it is not clear how to formalize its study in second order arithmetic.
The usual approach involves restricting to topologies with structure that allows you to see the topology in a second order way.
The most well studied approach looks at separable, complete metric spaces (see: \cite{Bro90},\cite{MDBook} Chapter 10).
However, many theorems of topology lack meaning in this setting.
For example, metrization and the study of not completely metrizable spaces (like order topologies) are totally excluded.

To remedy the lack of expressive power of metric spaces, there have been two main explored approaches.
The first is studying countable second-countable (CSC) spaces (eg. \cite{Dor11}, \cite{Sha20}).
In this setting everything is countable, so all aspects of the topological space can be captured using only second order arithmetic.
This system has the obvious disadvantage that it only describes countable spaces.
In particular, it is not a good place to study notions such as connectivity.

The second approach is studying countably based maximal filter (MF) spaces (e.g. \cite{MS09}).
It is this second approach that we focus on in this article.
Here, a topological space is encoded by a partially ordered set.
Each basic open is given by an element in this partially ordered set, and the points of the space are given by maximal filters in the partially ordered set.
A point is said to be in a basic open if the element corresponding to that open set is an element of the point as a maximal filter.
This approach has the advantage of defining a broad class of topological spaces that includes all separable, complete metric spaces and many spaces that are not metrizable (exactly classified in \cite{MS10}).
MF spaces as formalized here do have some undesirable properties.
For example, saying a subset of a partially ordered set is a maximal filter can be as difficult as requiring $\Pi_1^1-\text{CA}_0$ (this is not explicitly noted \cite{MS09}, but we will see a short proof of it later based on those results).
In other words, it may take $\Pi_1^1-\text{CA}_0$ to declare that a set encodes a point in your space.
For this reason, it can be difficult to compute relatively simple topological properties in the MF space setting.
For example, Mummert and Simpson \cite{MS09} showed that the set of pairs of opens such that the closure of the first is contained in the second may need $\Pi_2^1-\text{CA}_0$ to compute.

The above results are striking in the high complexity they achieve.
They also bring up a potential concern.
The difficulty of recording basic topological relations may hide the combinatorial core of a theorem or make a construction seem overly difficult to describe.
In this, results proven in the setting of MF spaces leave a stone unturned: what would the complexity of the result be without these descriptive difficulties surrounding basic topological notions like point and containment.
For example, the well known result of the metrization theorem being equivalent to $\Pi_2^1-\text{CA}_0$ in MF spaces (\cite{MS09}) ought to be far simpler if we were in a setting where basic notions were easier to compute.

The coding of an object is often an innocuous part of computability theory.
This is not the case with MF spaces; there are apparent difficulties with the chosen coding.
It is for this reason that alternate codings ought to be explored, especially if they result in different outcomes for the descriptive complexity of objects like a complete metric on the space.
This is what led Montalb\'an to propose the notion of proper MF spaces, and what leads us to investigate the notion of hybrid MF spaces.

In the next sections we will give the necessary background on reverse mathematics and MF spaces.
After this we will study proper MF spaces in general to justify the definition of hybrid MF spaces.
Next we will give a proof of the metrization theorem for hybrid MF spaces in $\text{ACA}_0$.
The last section will feature some complexity calculations and suggestions for future work.

\section{Background on Reverse Math}

Reverse mathematics aims to calibrate a proof to a particular subsystem of second order arithmetic.
Over a weak base system a typical argument shows that a mathematical theorem implies a stronger system and vice versa.
We mention the following systems in this paper (listed from weakest to strongest).
$\text{RCA}_0$ contains $\Delta_1^0$ comprehension and $\Sigma_1^0$ induction.
In other words, it allows only for the definitions of computable objects.
$\text{ACA}_0$ contains arithmetical comprehension and arithmetical induction.
This allows for the definition of arithmetical objects.
$\Pi_1^1-\text{CA}_0$ is $\text{ACA}_0$ along with $\Pi_1^1$ comprehension.
$\Pi_2^1-\text{CA}_0$ is $\text{ACA}_0$ along with $\Pi_2^1$ comprehension.
There are many texts that go over the definitions of these subsystems in detail (see e.g.\cite{Simbook}, \cite{MDBook}).

While $\text{RCA}_0$ is occasionally mentioned, $\text{ACA}_0$ is base system for this paper.
This base system is unusually strong, but this is typical when working with MF spaces. 
The reason for this is that in \cite{LM06} it was shown that the existence of a maximal filter in a poset is equivalent to $\text{ACA}_0$.
As our spaces will be coded by maximal filters in a poset, it is difficult to account for the case where there are no maximal filters at all.
Furthermore, typical objects like upward closures of sets in a poset cannot be calculated without $\text{ACA}_0$ and this is needed constantly.
In short, one cannot give an honest formalization of MF spaces outside of $\text{ACA}_0$.

Our main result concerns metrization, and it is proven in our base system.
We do not produce a reversal, as such an endeavor is not common when working in a base system.

One could understand the metrization result through a more structural lens.
The result in \cite{MS09} shows that given a regular MF space there is a metric on it, but this metric necessarily uses $\Pi_2^1$ sets in its definition.
Our metrization result is proven in $\text{ACA}_0$, so shows that any regular hybrid MF space (a concept defined later) has a metric on it and this metric is arithmetically defined.
In short, the subsystems of second order arithmetic used in this context can be understood as bounds on the complexity of defining additional metric structure on a given topological space.

\section{Background on MF Spaces} 
This section aims to give the definitions regarding MF spaces established in \cite{MS09} that will also be used in this article.
There will also be definitions of a few needed terms from general topology.

\begin{definition}
Fix a partially ordered set $P$.
A \textit{pre-filter} on $P$ is a non-empty subset $R$ with the property that for every $x,y\in R$ there is a $z\in R$ with $z\geq x$ and $z\geq y$.
A \textit{filter} on $P$ is a pre-filter that is upwards closed.
A \textit{maximal filter} is a filter that is not strictly contained in any filter.
\end{definition}

Given $S\subseteq P$ we let ucl$(S)$ denote the upwards closure of $S$.
Note that if $S$ is a pre-filter then ucl$(S)$ is a filter.
We use the concept of maximal filters to defined a maximal filter space.

\begin{definition}
Let $P$ be a countable partially ordered set.
$MF(P)$ is the set of maximal filters on $P$.
Every $p\in P$ codes a basic open set
\[ N_p:= \{m\in MF(P)\vert p\in m\}.\]
A general open set is given by $U\subseteq P$ as follows
\[ N_U:= \bigcup_{p\in U} N_p.\]
These open sets are called the \textit{poset topology} on $MF(P)$
\end{definition}

Note that in $\text{ACA}_0$ one can show that the $N_U$ are closed under unions and finite intersections.
In particular, it is justified to call these sets a topology.
Note also that we can code closed sets with the code for the open set that represents its complement.
Concepts that can be defined in a general topological space carry the same definitions in this context based on the open sets $N_U$.
For example, $m\in\text{cl}(N_U)$ if every $q\in m$ has a common extension with some $p\in U$.

The metrization theorem concerns regular spaces.
\begin{definition}
An MF space is \textit{regular} if for every $x\in MF(P)$ and $p\in P$ with $x\in N_p$ there is a $q\in P$ with $n\in N_q$ and $\text{cl}(N_q)\subseteq N_p$.
\end{definition}

We will also mention the more technical notion of strong regularity.
\begin{definition}
An MF space is \textit{strongly regular} if there is a sequence of subsets $\langle D_p\vert p\in P\rangle$ with the property that $N_p=\bigcup_{q\in R_p} N_q$ and $\text{cl}(N_q)\subseteq N_p$ whenever $q\in D_p$.
\end{definition}

Given a strongly regular space, and $p\in P$ we will always use $D_p$ to denote the set in the definition above.
In a classical sense it is not difficult to see that all regular spaces are strongly regular.
In fact, one can make this observation in $\Pi_2^1-\text{CA}_0$ (see \cite{MS09} Lemma 4.1).
One only must enumerate the set $p,q$ such that $\text{cl}(N_q)\subseteq N_p$.
One of the central insights of \cite{MS09} is that this set of pairs can in fact be $\Pi_2^1$-complete.
In other words, showing strong regularity from regularity requires the strength of $\Pi_2^1-\text{CA}_0$.
This insight is what ultimately leads to showing that the following metrization theorem is equivalent to $\Pi_2^1-\text{CA}_0$.

\begin{definition}
\textit{MFMT} is the statement that every regular MF space is homeomorphic to a complete separable metric space.
\end{definition}

The notion of homeomorphism between an MF space and a complete separable metric space is similar to the one we use in Definition \ref{fcode}.
It is not central to this exposition, so we omit the exact notion used in \cite{MS09} here.

We will briefly mention the notion of a \textit{normal} topological space which is one in which every pair $N_p,N_q$ with $\text{cl}(N_q)\cap \text{cl}(N_p)=\emptyset$ has $r,t$ such that $\text{cl}(N_q)\subseteq N_r$, $\text{cl}(N_p)\subseteq N_t$ and $N_t\cap N_r = \emptyset$.

The final topology definition that will be used is that of paracompactness.
\begin{definition}
A cover of a topological space is \textit{point finite} if every point in the space has a neighborhood that only intersects finitely many of the elements of the cover.
A topological space is \textit{paracompact} if every cover admits a point finite subcover.
\end{definition}

The fact that every complete metric space is paracompact was formalized as Theorem II.7.2 in \cite{Simbook}.
We will return to this notion when it is needed at the end of the final section.

\section{Proper MF Spaces and Hybrid MF Spaces}

Montalb\'an \cite{MonQs} made the following definition.

\begin{definition}
A partial order $P$ defines a \textit{proper MF space} if for $p,q\in P$, $p\leq q$ if and only if $N_p\subseteq N_q$ and if $p$ and $q$ are comparable there is an $r\in P$ such that $N_r=N_p\cap N_q$.
\end{definition}

The hope was that these sorts of spaces would be easier to work with than general MF spaces. In particular, every MF space would be codeable as a proper MF space, but issues like definability of points would be simplified.
This is what prompted Montalb\'an in \cite{MonQs} to ask his 15th question: what is the reverse mathematical strength of the complete metrization of proper MF spaces?

Unfortunately, we observe below that the notion of a proper MF space is not a suitable formulation for further study.
Contrary to the suspicions of Montalb\'an, proper MF spaces cannot code every MF space.

In the proposition below we use $p\wedge q$ to denote the $r$ such that $N_r=N_p\cap N_q$ in the proper MF space.

\begin{proposition}\label{proper}
The topology of a proper MF space given by the elements of its poset is a clopen basis.
\end{proposition}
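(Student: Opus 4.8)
The plan is to show that each basic open $N_p$ in a proper MF space is also closed, by exhibiting its complement as a union of basic opens. The key structural fact I would exploit is the defining clause of a proper MF space: $p \leq q$ iff $N_p \subseteq N_q$, and comparable $p,q$ have a meet $p\wedge q$ realizing $N_p \cap N_q$. First I would fix $p \in P$ and a point $m \in MF(P)$ with $m \notin N_p$, i.e.\ $p \notin m$. I want to produce $q \in m$ with $N_q \cap N_p = \emptyset$, which would show $m$ is in the interior of the complement of $N_p$, hence $N_p$ is closed.

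The central step is a maximality argument. Since $m$ is a maximal filter and $p \notin m$, the set $\mathrm{ucl}(\{r \wedge p : r \in m, \ r \text{ comparable to } p\}) \cup m$ fails to be a filter properly containing $m$; more directly, $m \cup \{p\}$ cannot generate a filter, so there must be some $q \in m$ with no common upper bound with $p$. I would argue that such a $q$ in fact has $N_q \cap N_p = \emptyset$: if some point $n$ lay in both $N_q$ and $N_p$, then $q,p \in n$, so $n$ being a filter gives a common extension of $q$ and $p$ inside $n \subseteq P$, contradicting the choice of $q$. (Here one must be slightly careful: the obstruction from maximality of $m$ gives $q \in m$ incomparable to $p$ or with no upper bound; the no-common-upper-bound conclusion is the one that matters, and incomparability alone would need the meet clause to rule out a hidden common extension — I would phrase the maximality extraction so that it directly yields the no-upper-bound version.)

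Given such a $q$, we have $m \in N_q$ and $N_q \subseteq MF(P) \setminus N_p$, since any point in $N_q$ contains $q$ and thus cannot contain $p$. Therefore the complement of $N_p$ is $\bigcup\{N_q : q \in P, \ N_q \cap N_p = \emptyset\}$, an open set, so $N_p$ is clopen. Since the $N_p$ form a basis by definition, this basis consists of clopen sets, which is the claim.

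The main obstacle I expect is the maximality extraction in the central step: one must convert "$m$ is maximal and $p \notin m$" into "$\exists q \in m$ with no common extension with $p$" cleanly. The natural move is to consider the pre-filter generated by $m \cup \{p\}$ — but this requires $p$ to be comparable to everything in $m$ to even form meets, which need not hold. The correct argument is instead: if \emph{every} $q \in m$ had a common upper bound with $p$, then $\{s : s \geq q \text{ and } s \geq p \text{ for some } q \in m\}$ together with $m$ would be directed (using directedness of $m$ and the meet clause to combine the witnesses for finitely many $q$'s), and its upward closure would be a filter properly containing $m$ and containing $p$, contradicting maximality. Making the directedness verification precise — in particular checking that finitely many common-upper-bound witnesses can themselves be amalgamated, which is where the $p \wedge q$ hypothesis or repeated use of the upper-bound hypothesis enters — is the one genuinely delicate point; everything else is routine.
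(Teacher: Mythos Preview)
Your approach is the contrapositive of the paper's and rests on the same core computation. The paper takes $M \in \partial(N_y)$, forms $N = M \cup \{m \wedge y : m \in M\}$, verifies $N$ is a pre-filter via the one-line identity $(m \wedge y)\wedge(n \wedge y) = (m \wedge n)\wedge y$ (using $m \wedge n \in M$), and concludes $y \in \mathrm{ucl}(N) = M$ by maximality; hence $\partial(N_y) \subseteq N_y$. You instead start from $p \notin m$ and seek $q \in m$ with $N_q \cap N_p = \emptyset$. The ``delicate amalgamation'' you flag at the end is exactly that pre-filter check, and the clean way to carry it out is to take the \emph{specific} witnesses $q \wedge p$ rather than arbitrary common extensions---once you do that, directedness is immediate from the identity above, and there is nothing further to worry about.

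One genuine slip to fix: you repeatedly write ``common upper bound'' and $s \geq q$, $s \geq p$, but the maximality argument needs common \emph{lower} bounds (what you elsewhere correctly call ``common extensions'', and what $r \wedge p$ actually produces). Filters here are downward directed and upward closed, so maximality of $m$ with $p \notin m$ gives some $q \in m$ with no common \emph{lower} bound with $p$; that is what forces $N_q \cap N_p = \emptyset$. Read literally, your upper-bound version is false whenever $P$ has a top element. The surrounding text makes clear this is a terminological slip rather than a conceptual one, but it runs through the whole final paragraph and should be corrected before the argument is written out in full.
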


\begin{proof}
Let $P$ be the partial order that gives rise to the MF space. 
Consider a basic open set coded by $y\in P$.
Say there is a maximal filter $M\subseteq P$ with $M\in \partial(N_y)$.
Consider the set 
$$N:=\{p\in P\vert p\in M \lor \exists m\in M ~ p=m\wedge y\}.$$
First note that $N$ does not contain the empty set.
This is because every open set containing $M$ must intersect $N_y$.
Therefore, any $m\in M$ has a non-empty meet with $y$.

We claim that $N$ is a pre-filter.
Note that for $m,n\in M$, we have that $m\wedge y, n\wedge y\in N$ so $(m\wedge y) \wedge (n\wedge y) = (m\wedge n)\wedge y\in N$ as $m\wedge n \in M$.
Similarly,  $m, n\wedge y \in N$ so $m \wedge (n\wedge y) = (m\wedge n)\wedge y\in N$ as $m\wedge n \in M$.
Of course, $m\wedge n \in M\subseteq N$.
This means that $\text{ucl}(N)$ is a filter.
Note that $\text{ucl}(N)$ contains $M$.
By maximality of $M$, this means that $M=\text{ucl}(N)$.
In particular, $y\in M$ and therefore $M\in N_y$

This means that the boundary of $N_y$ is contained in $N_y$, and therefore it is closed.
\end{proof}

\begin{example}
$[0,1]$ is not a proper MF space.
\end{example}

This follows from Proposition \ref{proper} and the fact that $[0,1]$ is connected, so it cannot have a clopen basis.

One method that this example discounts for finding posets including meet that code a particular topological space is taking the meet-semilattice of open sets given to us by the basis of the topology.
More explicitly, if we take the standard countable basis for $[0,1]$ of rational radius balls with rational centers, this meet-semilattice of open sets does not code $[0,1]$ in the sense of an MF space.
For example, the set of opens containing $1/2$ is not a maximal filter.
In fact, it can be extended to a maximal filter in two different ways.
Either the open rational intervals ending in $1/2$ can be added or the open rational intervals starting with $1/2$ can be added.
It can be observed that this topology is quite distinct from that of $[0,1]$ as every rational number is duplicated in this way.

This excursion leads us to the conclusion that the basic open sets in an MF space actually serve two distinct roles that cannot be unified into a single object:
\begin{enumerate}
	\item Like in all spaces, they encode topological information via a lattice of open sets,
	\item They code points via maximal filters on a selected subposet of the above lattice.
\end{enumerate}
In many cases the subposet cannot be a an induced subposet as in the example above.
Either this, or the subposet lacks meets or joins.
The lattice and poset are actually quite far apart from each other.
It seems natural then that a more complete description and encoding of an MF space should actually include both of these objects and how they interact.

This leads to the following "hybrid" approach to the definition of an MF space in second order arithmetic.

\begin{definition}
A \textit{hybrid MF space} is a pseudo-complemented distributive lattice $(L,\leq_L,\wedge,\vee,0,1,^{c})$ with a distinguished subposet $(P,\leq_p)$ such that:
\begin{enumerate}
	\item $x\leq_Ly$ for $x,y\in P$ if and only if every maximal filter in $P$ containing $x$ also contains $y$.
	\item Every non-zero element $x\in L$ is $\leq_L$ above infinitely many elements of $P$. For ease of notation we call this set $B(x)\subseteq P$.
	\item $x\leq_Ly$ for $x,y\in L$ if and only if $B(x)\subseteq B(y)$.
	\item $B(x \wedge y)=B(x)\cap B(y)$.
	\item $B(x\vee y)= B(x) \cup B(y)$.
	\item If $B(x \wedge y)=\emptyset$ then $x\wedge y=0$.
	\item If every maximal filter passes through $B(x \vee y)$ then $x\vee y=1$.
	\item $B(x^c)$ is the set of all $y$ such that $x\wedge y=0$.
\end{enumerate}
\end{definition}

We say that a maximal filter $M$ is in a basic open $p\in P$ if $p\in M$.
For $x\in L$ we say that $M$ is in $x$ if there is some $p\in P$ with $x\geq_Lp$ and $p\in M$.
$M$ is contained union of basic open sets coded by $W\subseteq L$ if there is an $x\in W$ with $M$ in $x$.

It can be shown that any partial order $P$ can be upgraded to a hybrid MF space using $\Pi_2^1-\text{CA}_0$.
The most difficult portion is the definition of $\leq_L$ among the elements of $P$.
The definition given above is naturally $\Pi_2^1$ as $\leq_L = \{(x,y) \vert \forall M\subseteq P ~ MaxFilt(M)\to (x\in M \to y\in M)\}$ and $MaxFilt(M)$ is $\Pi_1^1$.

To be concrete, it is not difficult to define a hybrid MF representation of the previously problematic space $[0,1]$.
$L$ is given by the collection of unions of finitely many rational subintervals of $[0,1]$.
$P$ is given by the rational subintervals of $[0,1]$.
$\leq_P$ is given by the relation that states that the closure of a an interval sits inside the larger interval (i.e. the right and left end points are contained in the larger interval).
Lemma 4.2 of \cite{MS09} demonstrates that this representation as an ordinary MF space is as desired.
$\leq_L$ is defined by containment which is easily computable.
$\wedge,\vee,0,1$ and $^{c}$ are similarly easy to compute.

The following is essentially immediate from the work of Mummert and Simpson (e.g. it follows from Lemma \ref{strongReg} below).

\begin{lemma}[\cite{MS09}]
The statement that every partial order $P$ can be extended to a hybrid MF-space is equivalent over $\text{ACA}_0$ to $\Pi_2^1-\text{CA}_0$.
\end{lemma}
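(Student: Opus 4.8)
The statement is an equivalence, so the plan is to prove the two implications separately over $\text{ACA}_0$. The guiding observation is the remark made just before the lemma: the only genuinely non‑arithmetical ingredient of a hybrid MF structure is the order $\leq_L$ restricted to $P$, which is $\Pi_2^1$ because it is the specialization relation $\{(p,q) : \forall M\,(MaxFilt(M)\to(p\in M\to q\in M))\}$ and $MaxFilt$ is $\Pi_1^1$. Both directions exploit this.

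For $\Pi_2^1-\text{CA}_0 \Rightarrow$ ``every $P$ extends'': here ``extends'' is understood up to homeomorphism of the resulting point space (as in the $[0,1]$ example above), since a two‑element antichain already fails axiom (2) if $P$ must be the distinguished subposet verbatim. Given $P$, I would first pass, computably, to an equivalent poset $Q$: delete the elements coding empty basic opens (definable by $\Sigma_1^1$ comprehension, harmless since maximal filters never contain such elements, and available well below $\Pi_2^1-\text{CA}_0$) and then inflate by replacing each surviving element by infinitely many copies, ordered by the old relation on the first coordinate; one checks in $\text{RCA}_0$ that $MF(Q)$ is homeomorphic to $MF(P)$ and that now every nonempty basic open is refined by infinitely many basic opens. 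Next, apply $\Pi_2^1$ comprehension once to form the specialization relation $\preceq$ on $Q$; this is the only place more than $\text{ACA}_0$ is used. With $\preceq$ in hand, put $B(q)=\{r\in Q : r\preceq q\}$ and let $L$ be a suitable countable pseudo‑complemented distributive lattice of $\preceq$‑downward‑closed subsets of $Q$ --- generated from the principal downsets $B(q)$, from $\emptyset$ and $Q$, and closed under $\cap$, $\cup$ and the downward‑closed pseudo‑complement $S\mapsto\{r : B(r)\cap S=\emptyset\}$ --- with $\leq_L=\subseteq$, $\wedge=\cap$, $\vee=\cup$, $0=\emptyset$, $1=Q$ and $x^{c}$ the pseudo‑complement; all of this is arithmetical in $\preceq$. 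It then remains to verify axioms (1)--(8): (3)--(5) and (8) hold by construction; (1) is the definition of $\preceq$ together with $p\preceq q\Leftrightarrow B(p)\subseteq B(q)$; (6) follows from $q\wedge r=0\Leftrightarrow B(q)\cap B(r)=\emptyset\Leftrightarrow N_q\cap N_r=\emptyset$, the last equivalence because the basic opens form a basis; and (2) and (7) are where the inflation and the choice of $L$ are used --- (2) because $B(q)$ now contains the infinite set of copies of $q$, and (7) because one arranges that an element of $L$ is missed by some maximal filter whenever it is not all of $Q$. Since the extra comprehension was confined to forming $\preceq$, the construction goes through in $\Pi_2^1-\text{CA}_0$.

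For the converse, ``every $P$ extends'' $\Rightarrow \Pi_2^1-\text{CA}_0$, I would lean on the core computation of \cite{MS09}, packaged below as \cref{strongReg}: for every set $X$ there is a partial order $P_X$, uniformly arithmetical in $X$, for which the specialization relation on $P_X$ (equivalently, the data witnessing strong regularity of $MF(P_X)$) codes a set that is $\Pi_2^1$‑complete relative to $X$. If $P_X$ extends to a hybrid MF space then, by axiom (1), that relation is just $\leq_L$ restricted to $P_X$, hence exists as a set, and membership in the corresponding $\Pi_2^1(X)$‑complete set becomes arithmetical in the hybrid datum. As $X$ was arbitrary, this yields $\Pi_2^1$ comprehension over $\text{ACA}_0$.

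The main obstacle is the reverse direction, and within it the ``global'' axioms (2) and especially (7), which quantify over all maximal filters: one must choose the preliminary modification of $P$ and the lattice $L$ so that these hold, and then confirm --- this is the real substance of the claim --- that no axiom other than the definition of $\leq_L$ on $P$ requires strength beyond $\text{ACA}_0$. The forward direction is comparatively soft, being a repackaging of the Mummert--Simpson completeness computation; the only care needed there is to identify their $\Pi_2^1$‑complete relation with (or as arithmetically interdefinable with) the one exposed by axiom (1) of a hybrid MF space.
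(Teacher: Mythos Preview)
The paper gives no proof here: it declares the lemma ``essentially immediate from the work of Mummert and Simpson'' and points to Lemma~\ref{strongReg}. Reading between the lines, the intended argument is: for the forward direction, the paragraph just before the lemma already sketches that the only $\Pi_2^1$ step is forming $\leq_L$ on $P$ as $\{(x,y):\forall M\,(MaxFilt(M)\to(x\in M\to y\in M))\}$; for the reversal, Lemma~\ref{strongReg} shows that from a hybrid structure one reads off $\{(p,q):\text{cl}(N_p)\subseteq N_q\}$ arithmetically via the test $p^{c}\vee q=1$, and this is precisely the relation \cite{MS09} proved can be $\Pi_2^1$-complete. Your forward direction is essentially the paper's sketch with welcome extra care about axiom~(2).

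Your reversal, however, has a gap. You invoke axiom~(1) to extract the \emph{specialization} relation $N_p\subseteq N_q$, and then assert parenthetically that this is ``equivalently, the data witnessing strong regularity.'' It is not: strong regularity concerns $\text{cl}(N_p)\subseteq N_q$, and that --- not specialization --- is what Mummert--Simpson proved $\Pi_2^1$-complete. You give no argument that the bare specialization relation reaches that complexity. The repair is exactly what the paper gestures at: instead of axiom~(1), use the pseudo-complement and Lemma~\ref{strongReg} to recover $\{(p,q):p^{c}\vee q=1\}=\{(p,q):\text{cl}(N_p)\subseteq N_q\}$ from the hybrid datum, which then plugs directly into the Mummert--Simpson completeness. (Incidentally, your ``\textbackslash cref\{strongReg\}'' does not say what you attribute to it; that lemma is the statement that regular hybrid spaces are strongly regular, not a completeness result.) There is also an internal tension you should resolve: in the forward direction you insist ``extends'' be read up to homeomorphism, yet your reversal silently assumes the literal reading when you apply axiom~(1) to $P_X$ itself. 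If you keep the homeomorphism reading, you must argue that the $\Pi_2^1$-hard relation on $P_X$ is still recoverable from the hybrid data on the modified poset --- plausible given your deletion-and-inflation is uniform, but it needs to be said.
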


The question now becomes what sort of results can be proven in the hybrid formalism.
In the next section we explore Montalb\'an's 15th question from \cite{MonQs} in the hybrid context.
We assess the strength of the metrization theorem in a setting where basic topological definitions are easily definable, as Montalb\'an desired.

However, before we move forward we note here some basic properties of the pseudo-complement that will prove helpful later on.
Note that the definition of pseudo-complement can be naturally extended to open sets coded by reals.
We let $U^c=\{p\in L \vert \forall q\in U q\vee p = 0\}$, a set definable in $\text{ACA}_0$.
All of these properties are provable in $\text{ACA}_0$.
In fact, the proof is quite straightforward and precisely the same as Lemma 4.5 in \cite{MS09} (a more detailed treatment can be found in \cite{MThesis} Lemma 4.3.11).

\begin{lemma}\label{complement}
($\text{ACA}_0$) Fix an open set $U$ coded by a real number in a hybrid MF space.
\begin{enumerate}
	\item The entire space is a disjoint union of $\text{cl}(U)$ and $U^c$. In particular, every point is in exactly one of these sets.
	\item There is no point in both $U$ and $\text{cl}(U^c)$.
\end{enumerate}
\end{lemma}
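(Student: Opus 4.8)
The plan is to port the proof of Lemma~4.5 of \cite{MS09} almost verbatim; the only genuinely new ingredient is a short dictionary translating statements about basic opens into statements about meets in $L$. Its two entries are: (i) for $a\in P$ and $b\in L$, writing $N_b$ for the open set of points that are in $b$, one has $N_a\cap N_b=N_{a\wedge b}$ --- this follows from axiom~(4), $B(a\wedge b)=B(a)\cap B(b)$, together with axiom~(1), used to turn ``some $p\in B(a)$ belongs to $M$'' into ``$a\in M$'', and the filter axioms on $P$; and (ii) $N_{a\wedge b}=\emptyset$ if and only if $a\wedge b=0$, the forward direction being the contrapositive of axiom~(6) and the backward direction combining axiom~(2) with the $\text{ACA}_0$ fact of \cite{LM06} that every element of $P$ lies in some maximal filter. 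I would also record the one-line monotonicity fact that $c\leq_L d$ and $d\wedge e=0$ imply $c\wedge e=0$. All of these relations are arithmetic in the given codes, so the whole argument stays inside $\text{ACA}_0$. Recall, finally, that $p\in U^c$ means $p\wedge x=0$ for every $x\in U$, and that a point $M$ lies in $N_{U^c}$ exactly when some $r\in P\cap M$ satisfies $r\in U^c$.

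Granting the dictionary, item~(1) splits into disjointness and covering. For disjointness, suppose a point $M$ were in both $\text{cl}(N_U)$ and $N_{U^c}$. Then some $r\in P\cap M$ has $r\wedge x=0$ for all $x\in U$. Since $r\in M$, the basic open $N_r$ is a neighborhood of $M$, so $M\in\text{cl}(N_U)$ yields some $x\in U$ with $N_r\cap N_x\neq\emptyset$; by (i) and (ii) this forces $r\wedge x\neq0$, a contradiction. For covering, suppose $M\notin\text{cl}(N_U)$. Then some basic neighborhood $N_r$ of $M$, with $r\in P\cap M$, misses $N_U$ and hence misses every $N_x$ with $x\in U$; by (i) and (ii), $r\wedge x=0$ for all $x\in U$, i.e.\ $r\in U^c$, and since $r\in M$ this says exactly $M\in N_{U^c}$.

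For item~(2), suppose a point $M$ were in both $N_U$ and $\text{cl}(N_{U^c})$. From $M\in N_U$, fix $x\in U$ and $p\in P\cap M$ with $p\leq_L x$. Since $p\in M$, the basic open $N_p$ is a neighborhood of $M$, so $M\in\text{cl}(N_{U^c})$ produces a point $M'$ with $p\in M'$ and $M'\in N_z$ for some $z\in U^c$; thus $M'\in N_p\cap N_z=N_{p\wedge z}$ by (i). But $z\in U^c$ and $x\in U$ give $x\wedge z=0$, and $p\leq_L x$ then gives $p\wedge z\leq_L x\wedge z=0$ by monotonicity; so $N_{p\wedge z}=\emptyset$ by (ii), contradicting $M'\in N_{p\wedge z}$. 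Combined with item~(1), every point lies in exactly one of $\text{cl}(N_U)$ and $N_{U^c}$.

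Thus the substance of the proof is confined to setting up entries (i) and (ii) of the dictionary carefully and to unwinding the various ``$M$ is in $x$'' clauses correctly when $x\in L\setminus P$; after that everything is routine, exactly as in \cite{MS09}. The only place I expect to need any care is the degenerate role of the bottom element: the backward direction of (ii), and with it the contradictions above, rely on $N_0=\emptyset$ (equivalently, $B(0)=\emptyset$ and no maximal filter on $P$ contains $0$), which I would record as part of the basic setup for hybrid MF spaces. None of this is deep, consistent with the assertion that the argument mirrors Lemma~4.5 of \cite{MS09}.
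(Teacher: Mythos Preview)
Your proposal is correct and matches the paper's approach exactly: the paper does not give a detailed proof but simply asserts that the argument is ``precisely the same as Lemma~4.5 in \cite{MS09}'' (with further detail in \cite{MThesis}, Lemma~4.3.11), which is exactly what you set out to do. Your dictionary entries~(i) and~(ii) and the care you take with the bottom element $0$ are the expected bookkeeping for the hybrid setting, and the remainder of your argument is the standard one.
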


We will use the above lemma freely in the proofs of the next section.

\section{Metrization}

In MF spaces, there is a deep difference between regular spaces and the made-for-reverse-math notion of strongly regular spaces. In fact, showing that regular spaces are strongly regular is equivalent to $\Pi_2^1-\text{CA}_0$. In hybrid MF spaces, this distinction does not exist and can be seen in the base theory of $\text{ACA}_0$.

\begin{lemma}\label{strongReg}
($\text{ACA}_0$) Every regular hybrid MF space is strongly regular.
\end{lemma}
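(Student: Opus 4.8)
The plan is to give an arithmetical definition of the witnessing sequence $\langle D_p \mid p \in P\rangle$ directly, exploiting the fact that in a hybrid MF space the relation $\mathrm{cl}(N_q) \subseteq N_p$ (or rather its lattice-theoretic analogue) is no longer a $\Pi^1_2$ obstruction but something computable from the lattice structure. First I would unwind what the strong regularity data amounts to in the hybrid setting: for each $p \in P$ we need a set $D_p \subseteq P$ with $N_p = \bigcup_{q \in D_p} N_q$ and $\mathrm{cl}(N_q) \subseteq N_p$ for every $q \in D_p$. The key observation is that, using the pseudo-complement and Lemma \ref{complement}, the containment $\mathrm{cl}(N_q) \subseteq N_p$ can be replaced by the lattice-level condition that $q^c \vee p = 1$ in $L$ — equivalently, that every maximal filter passes through $B(q^c \vee p)$, but by axiom (7) of the hybrid MF definition this collapses to an equality in $L$, and $L$ is a genuine second-order object whose operations are given outright. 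So the relation "$q$ is a legitimate member of $D_p$" becomes arithmetical (indeed, once we know $q^c$, it is essentially atomic), and $\mathrm{ACA}_0$ suffices to form the set.

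The main steps, in order: (1) Fix a regular hybrid MF space $(L,P)$. For $p \in P$, define $D_p := \{\, q \in P \mid q^c \vee p = 1 \text{ in } L \,\}$ — this is arithmetical in the given data since $^c$ and $\vee$ are part of the structure. (2) Show $\mathrm{cl}(N_q) \subseteq N_p$ for each $q \in D_p$: if $q^c \vee p = 1$ then by Lemma \ref{complement}(1) the whole space is the disjoint union of $\mathrm{cl}(N_q)$ and $N_{q^c}$, so any point not in $N_{q^c}$ lies in $\mathrm{cl}(N_q)$; combined with $N_{q^c} \cup N_p$ being everything, a point of $\mathrm{cl}(N_q)$ cannot be in $N_{q^c}$ (Lemma \ref{complement}(2) gives $\mathrm{cl}(N_q) \cap N_{q^c} = \emptyset$... actually one needs $\mathrm{cl}(N_q)$ disjoint from $N_{q^c}$, which is exactly \ref{complement}(2) applied with $U = N_q$), hence it lies in $N_p$. (3) Show $N_p = \bigcup_{q \in D_p} N_q$: the inclusion $\supseteq$ is immediate from (2). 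For $\subseteq$, take $x \in N_p$; by regularity of the hybrid MF space there is a basic open with $x \in N_r$ and $\mathrm{cl}(N_r) \subseteq N_p$; then argue that $r$ (or a suitable element of $B$ below $r$, using axiom (2) of the hybrid definition to land inside $P$) satisfies $r^c \vee p = 1$, so $r \in D_p$ and $x \in N_r \subseteq \bigcup_{q \in D_p} N_q$. The translation between $\mathrm{cl}(N_r) \subseteq N_p$ and $r^c \vee p = 1$ is again via Lemma \ref{complement}: the former says the complement of $N_p$ is inside $N_{r^c}$, i.e. $B(p^c) \subseteq B(r^c)$... I would phrase it as: $\mathrm{cl}(N_r) \subseteq N_p$ iff $\mathrm{cl}(N_r) \cap N_{p^c} = \emptyset$ iff (by \ref{complement}(2) and (1)) $N_{p^c} \subseteq N_{r^c}$, which by axiom (3)/(8) is the lattice inequality $p^c \leq_L r^c$, equivalently $r^c \vee p^c$... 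I'd need to be a little careful to land on exactly $r^c \vee p = 1$; the cleanest route is $p^c \leq_L r^c$, and then $D_p := \{q \in P \mid p^c \leq_L q^c\}$, which is still arithmetical.

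The step I expect to be the main obstacle is step (3), specifically making sure the witness $r$ handed to us by regularity can be taken inside the subposet $P$ and that the lattice inequality genuinely captures $\mathrm{cl}(N_r) \subseteq N_p$ on the nose; regularity as stated already gives $q \in P$, so that half is fine, but I will need to double-check that the passage from a topological containment of closures to a $\leq_L$-comparison of pseudo-complements does not secretly require comparing closures of $L$-elements rather than $P$-elements, and invoke axioms (2)–(3) and (8) to keep everything on the arithmetical side. Once that dictionary is pinned down, the rest is a routine bookkeeping argument, and the whole construction visibly goes through in $\mathrm{ACA}_0$ since every set formed is arithmetical in $(L, P, \leq_L, \wedge, \vee, {}^c)$ together with the (arithmetical, by Lemma \ref{complement} and the preceding remarks) closure relation.
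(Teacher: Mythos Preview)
Your proposal is correct and matches the paper's approach exactly: the paper's one-line proof is simply that the relation $\mathrm{cl}(N_q)\subseteq N_p$ is captured by the lattice condition $q^c\vee p=1$, which is arithmetical in the hybrid data, so the sets $D_p$ exist by arithmetical comprehension. Your steps (2) and (3) spell out the verifications the paper leaves implicit, and your worry about landing in $P$ is unfounded since regularity as stated already hands you $q\in P$.
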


\begin{proof}
We only need to form the set of $p,q\in P$ such that $\text{cl}(N_p)\subseteq N_q$.
This is easily done by checking the condition $p^c\vee q=1$.
\end{proof}

Another previously discussed aspect of MF spaces is that it is difficult to define the set of points.
In general, the points are $\Pi_1^1$ to define.
\begin{lemma}
The predicate $MaxFilt(X)$ which states that $X$ is a maximum filter in a poset is $\Pi_1^1$ in general.
\end{lemma}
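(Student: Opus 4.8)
The plan is to show both an upper bound ($MaxFilt(X)$ is $\Pi^1_1$ in general) and a lower bound (it can be properly $\Pi^1_1$, i.e.\ not in general arithmetical or even $\Sigma^1_1$). For the upper bound, I would simply write out the definition: $X$ is a maximal filter on a countable poset $P$ iff $X$ is a filter (an arithmetical condition on $X$ — nonempty, upward closed, and directed) and $X$ is not properly contained in any filter. The maximality clause is where the second-order quantifier enters: ``for all $Y \subseteq P$, if $Y$ is a filter then $\neg(X \subsetneq Y)$.'' Since ``$Y$ is a filter'' is arithmetical in $Y$, the whole predicate has the form $(\text{arithmetical in } X) \wedge \forall Y\,(\text{arithmetical in } X,Y)$, which is $\Pi^1_1$. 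I would remark that, as is standard, maximality of a filter is equivalent to $X$ being an \emph{ultrafilter}-like object in the sense that for every $p \in P$, either $p \in X$ or $p$ is incompatible with some element of $X$ — but this reformulation still contains the second-order content implicitly (it is equivalent to the $\Pi^1_1$ statement, not obviously simpler), so I would not rely on it to reduce complexity.

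For the lower bound — which is the real content of the word ``in general'' — I would exhibit a single countable poset $P$ (or a uniformly computable family $P_e$) for which $\{X : MaxFilt_P(X)\}$ is properly $\Pi^1_1$, i.e.\ $\Sigma^1_1$-hard as the complement, so it is not $\Sigma^1_1$ and in particular not arithmetical. The natural route is to reduce the $\Sigma^1_1$-complete set of (indices for) ill-founded trees, or equivalently to reduce ``being a path through a given tree.'' Concretely: given a tree $T \subseteq \omega^{<\omega}$, build a poset $P_T$ whose elements are the nodes of $T$ ordered by \emph{extension} (so $\sigma \leq_{P_T} \tau$ iff $\tau \supseteq \sigma$), possibly with a formal bottom element adjoined; then a maximal filter through $P_T$ above the root that is not eventually constant corresponds precisely to an infinite branch of $T$. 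The key calculation is that the branches of $T$ are $\Pi^1_1$-many as a set of reals (they form a closed, hence $\Pi^0_1$, set — that's not hard enough), so the genuine hardness must come from arranging the poset so that \emph{finite} filters are blocked from being maximal exactly when $T$ is ill-founded, forcing $MaxFilt$ to detect the existence of an infinite path. This is exactly the phenomenon behind the Levy--McLaughlin result cited in the excerpt (``the existence of a maximal filter in a poset is equivalent to $\text{ACA}_0$'' lives in the same circle of ideas, and the $\Pi^1_1$-completeness of maximal-filter existence is the boldface/lightface sharpening).

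The key steps, in order, would be: (1) state and verify the $\Pi^1_1$ upper bound by direct inspection of the definition, noting ``filter'' is arithmetical and the only unbounded set quantifier is the universal one in the maximality clause; (2) recall that the existence of a maximal filter extending a given finite filter is $\Sigma^1_1$ and can be $\Sigma^1_1$-complete (this is essentially the content of the cited \cite{LM06}-type results, and the excerpt explicitly flags that ``this is not explicitly noted in \cite{MS09}, but we will see a short proof of it later''); (3) convert a completeness result for \emph{existence} into one for the predicate $MaxFilt(X)$ itself by a standard padding/relativization: attach to each $\Sigma^1_1$ instance a poset $P$ and a \emph{candidate} $X$ that is forced to be a maximal filter iff the instance is positive, for instance by making $X$ a filter that is ``almost maximal'' with a single controlled way to extend it that is available exactly when the instance is positive; (4) conclude that $MaxFilt$ is $\Pi^1_1$-complete, hence properly $\Pi^1_1$ and not arithmetical.

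The main obstacle I anticipate is step (3): the cleanest completeness results in this area are naturally about the \emph{existence} of a maximal filter (a $\Sigma^1_1$ or $\Pi^1_1$ \emph{sentence} about the poset), whereas the lemma as stated is about the \emph{predicate} $MaxFilt(X)$ with $X$ a free set variable, so one must be careful to produce, uniformly, both a poset and a specific alleged filter $X$ whose maximality is equivalent to the $\Sigma^1_1$ condition — and to check that $X$ genuinely is a filter in the negative case (so that the failure is \emph{only} of maximality, keeping the predicate exactly $\Pi^1_1$ rather than accidentally falling lower). A safe way to sidestep subtleties is to take $X$ to be the principal filter generated by the root of $P_T$ together with a ``spine'' that encodes a putative branch, and then argue maximality fails unless the spine can be completed to an infinite branch; the bookkeeping there is routine but must be done with the arithmetical-vs-$\Sigma^1_1$ boundary kept firmly in view. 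I would also flag that since this lemma is only asserting a \emph{general} bound (not a bound for every poset), even just producing one poset where $MaxFilt$ is not arithmetical suffices, so if the full $\Pi^1_1$-completeness bookkeeping proves delicate I would fall back to the weaker but adequate statement that there is a computable poset $P$ with $\{X : MaxFilt_P(X)\}$ properly $\Pi^1_1$.
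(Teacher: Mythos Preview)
Your approach and the paper's diverge sharply on the lower bound. The paper gives a three-line indirect argument: it writes the relation $\text{cl}(N_q)\subseteq N_p$ as
\[
\forall X\,\big(MaxFilt(X)\ \wedge\ \forall \ell\in X\ \exists r\ (r\geq \ell \wedge r\geq q)\big)\ \to\ p\in X,
\]
and observes that if $MaxFilt$ were $\Sigma^1_1$ (so its negation $\Pi^1_1$), this formula would be $\Pi^1_1$; but Mummert--Simpson proved the closure-containment relation can be $\Pi^1_2$-complete, contradiction. No construction, no padding --- just a complexity bound pushed backward through a known hard object. This is exactly what the introduction foreshadowed (``a short proof\ldots based on those results'').

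Your direct route is in principle viable, but there is a genuine gap at step~(2): the existence of a maximal filter extending a given (finite) filter is \emph{not} $\Sigma^1_1$-complete --- by the very result you cite from \cite{LM06}, $\text{ACA}_0$ proves every filter extends to a maximal one, so the existence statement is simply true and carries no hardness. You therefore have no $\Sigma^1_1$-complete ``source'' to feed into step~(3), and step~(3) itself, as you acknowledge, is left as a sketch. You correctly diagnosed that the tree construction only gives a $\Pi^0_1$ set of maximal filters, but the workaround you gesture at (a spine that must be completed to a branch) does not obviously escape the same bound without substantially more apparatus. A direct $\Pi^1_1$-completeness proof for $MaxFilt$ over a fixed computable poset can be made to work, but it requires building a poset whose maximal filters encode well-foundedness of parameter trees, and that is considerably more than what you have written. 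The paper's route trades all of that for one citation and one line of formula-counting; given that \cite{MS09} is already on the table, that is the intended and far cheaper argument.
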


\begin{proof}
In \cite{MS09} they show that the set of $p,q$ such that $\text{cl}(N_q)\subseteq N_p$ can be $\Pi_2^1$  complete.
We can write this set in terms of the $MaxFilt(X)$ predicate as follows:
\[ \text{cl}(N_q)\subseteq N_p \iff \forall X ~ \big(MaxFilt(X)\land \forall \ell\in X \exists r ~ r\geq\ell \land r\geq q \big) \to p\in X.\]
If the above is $\Pi_2^1$  complete then $MaxFilt(X)$ must be strictly $\Pi_1^1$.
\end{proof}

The above is true even if we restrict to case of regular spaces, the matter of interest in the metrization theorem.
A major contrast in the hybrid approach is that the set of points is definable in the base theory of $\text{ACA}_0$ due to the following observation.

\begin{lemma}\label{arithPoint}
($\text{RCA}_0$) There is an arithmetic formula that is true exactly on the maximal filters of a given regular hybrid MF space.
\end{lemma}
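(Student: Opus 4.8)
The plan is to exploit the fact that in a hybrid MF space the lattice $L$ carries all the topological bookkeeping arithmetically, so ``being a maximal filter in $P$'' can be detected by looking at how a subset of $P$ interacts with $L$ rather than by quantifying over all filters extending it. Concretely, I would first observe that a subset $X\subseteq P$ is a filter in $P$ iff it is upward closed in $\leq_p$ and for all $x,y\in X$ there is $z\in X$ with $z\geq_p x$, $z\geq_p y$; this is already arithmetic (indeed $\Pi^0_1$) given $P$. The real content is characterizing \emph{maximality} without an extra real quantifier. Here I would use regularity together with Lemma \ref{complement}: for a filter $X$, maximality should be equivalent to a ``no gaps'' condition saying that $X$ meets $B(p)$ for every nonzero $p\in L$ that is ``forced'' by $X$, or dually that for every $q\in P$ not in $X$ there is some $p\in X$ with $N_p\cap N_q=\emptyset$, i.e. $p^c\geq_L q$ in the lattice. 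The point is that $p^c$, $\vee$, $\wedge$, and $\leq_L$ on all of $L$ are part of the given data and hence arithmetic, so this separation condition is arithmetic in $X$.

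Next I would make the equivalence precise. In one direction, if $X$ is a filter and there is $q\in P\setminus X$ with no $p\in X$ satisfying $N_p\cap N_q=\emptyset$, then I claim $\operatorname{ucl}$ of $X\cup\{\,p\wedge q : p\in X\,\}$ (meets taken in $L$, then their basic-open refinements in $P$, using clause (2) that every nonzero lattice element lies above infinitely many elements of $P$) is a filter strictly containing $X$, contradicting maximality — this is the same kind of computation as in the proof of Proposition \ref{proper}, but now carried out in the lattice where meets always exist. In the other direction, if $X$ is a filter with the separation property and $Y\supsetneq X$ is a filter, pick $q\in Y\setminus X$; by the separation property there is $p\in X\subseteq Y$ with $N_p\cap N_q=\emptyset$, so $p,q\in Y$ have no common extension, contradicting that $Y$ is a pre-filter. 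Regularity enters to guarantee that the separation property is equivalent to the more usable statement ``for every $q\notin X$ some $p\in X$ has $p^c\vee(\text{something above }q)=1$'', i.e. that closures can be separated by lattice operations; strong regularity (Lemma \ref{strongReg}) lets me replace any appeal to ``there exists a witness with $\operatorname{cl}(N_r)\subseteq N_s$'' by a quantifier over the arithmetic sets $D_s$.

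So the formula I would write is: $\mathit{MaxFilt}(X)$ iff $X$ is a nonempty upward-closed directed subset of $P$ and for every $q\in P$, if $q\notin X$ then there exists $p\in X$ with $p^c\geq_L q$ (equivalently $p\wedge q=0$ in $L$, equivalently $q\in B(p^c)$ by clause (8)). Every clause here is arithmetic in the given hybrid MF data, so the whole formula is arithmetic; in fact it is $\Pi^0_2$, and over $\mathrm{RCA}_0$ one can quote it verbatim. I would then verify that this formula is provably (in $\mathrm{ACA}_0$, or even $\mathrm{RCA}_0$ with the data in hand) equivalent to the genuine maximal-filter predicate on regular hybrid MF spaces, using the two directions above plus Lemma \ref{complement}(1) to know that the separating conditions partition the space correctly.

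The main obstacle I anticipate is the forward direction of the maximality characterization: showing that a filter failing the separation condition is genuinely non-maximal requires actually building a strictly larger filter, and this must be done \emph{inside} $P$, not just in $L$. This is where clause (2) (every nonzero $x\in L$ lies above infinitely many elements of $P$) is essential — it lets me push the lattice-level meet $p\wedge q$ back down to witnesses in $P$ — and where I must be careful that the resulting set is still directed in $\leq_p$, mirroring the pre-filter verification in the proof of Proposition \ref{proper}. A secondary subtlety is confirming that regularity (as opposed to strong regularity) is enough to get the arithmetic separation clause in the first place; if not, I would invoke Lemma \ref{strongReg} to upgrade to strong regularity in $\mathrm{ACA}_0$ and state the present lemma for $\mathrm{RCA}_0$ only after that upgrade, or simply phrase the arithmetic formula in terms of the data $\langle D_p\rangle$ directly.
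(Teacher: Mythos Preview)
Your proposed formula --- ``$X$ is a filter and for every $q\in P\setminus X$ there exists $p\in X$ with $p\wedge q=0$'' --- is not equivalent to maximality; it misses the boundary case. Take the hybrid MF presentation of $[0,1]$ described in the paper (rational intervals under closure-containment as $P$), let $M$ be the maximal filter corresponding to the point $1/2$, and let $q=(1/2,3/4)$. Then $q\notin M$, yet every $p=(a,b)\in M$ has $a<1/2<b$, so $p\wedge q=(1/2,\min(b,3/4))\neq 0$ in $L$. Thus $M$ fails your separation clause while being a genuine maximal filter. In general, whenever the point $M$ lies in $\operatorname{cl}(N_q)\setminus N_q$, no $p\in M$ can satisfy $p\wedge q=0$, so your formula undercounts the maximal filters. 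Your ``forward direction'' argument --- that failure of separation lets you build a strictly larger filter from $X\cup\{p\wedge q:p\in X\}$ --- is exactly where this breaks: those meets live in $L$, and pushing them down into $P$ via clause~(2) gives elements that need not be compatible with all of $M$ under $\leq_p$ (indeed in the example they cannot be, since $M$ is maximal).

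The paper handles precisely this boundary case by adding a second clause that uses the strong-regularity data $\langle D_p\rangle$: for $q\notin M$ one requires \emph{either} some $m\in M$ with $m\wedge q=0$ \emph{or} that for every $r\in D_q$ there is $n\in M$ with $r\wedge n=0$. The point is that if $M\in\operatorname{cl}(N_q)\setminus N_q$, then for each $r\in D_q$ one has $\operatorname{cl}(N_r)\subseteq N_q$, hence $M\notin\operatorname{cl}(N_r)$, so separation from $r$ (rather than from $q$) succeeds. You gesture at using $D_p$ as a fallback in your last sentence, but this is not a technicality to be patched in --- it is the substantive content of the lemma, and without it the characterization is simply false.
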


\begin{proof}
We claim that a subset of $P$ is a maximal filter if and only if it is a filter and for any $p\in P-M$ one of the following 2 cases hold:
\begin{enumerate}
	\item there is an $m\in M$ such that $p\wedge m= \emptyset$,
	\item for every $r\in D_p$ there is an $n\in M$ such that $r\wedge n = \emptyset$.
\end{enumerate}

We first show that if one of these conditions hold of every $p\in P-M$, then $M$ is maximal.
If $M$ was not maximal, there would be a maximal filter $N$ containing some point $p\in P-M$.
If Condition 1 holds of this point, then $N$ cannot be a filter as there is $p,m\in N$ with no common extension.
This is an immediate contradiction.
Say instead that Condition 2 holds of the point $p$.
As the $r\in D_q$ cover $N_p$ by construction, in order theoretic terms, any maximal filter containing $p$ must contain one of the $r\in D_q$.
Therefore, $N$ contains one of these $r$.
However, $N$ cannot contain such an $r$ as there is an $n\in M$ such that $r\wedge n = \emptyset$.

We now show that if $M$ is maximal then one of these conditions holds.
If $M$ is maximal, then it is a point in the space, and $p$ represents an open set that does not contain $M$.
It is immediate that Condition 1 holds exactly when $M$ is not in the closure of $N_p$.
Therefore, we need to check that if $M$ is in the boundary of $N_p$, then Condition 2 holds.
Consider an $r\in D_p$.
As $\text{cl}(N_r)\subseteq N_p$, $M\not\in \text{cl}(N_r)$.
Therefore, there is  $n\in M$ such that $r\wedge n = \emptyset$, as desired.
\end{proof}

We will use the arithmetic definition of maximal filter moving forward in the proof.

We now prove metrizability in the hybrid setting.
The outline of the proof is similar to the one in \cite{MS09}, but there are differing details throughout.
These details are critical as they move the complexity of the proof down from $\Pi_2^1-\text{CA}_0$ to $\text{ACA}_0$.
We start with the regularity implies normality lemma.

\begin{lemma}
($\text{ACA}_0$) Let $X$ be a hybrid MF space. There are functionals $\nu_1,\nu_2$ with the following property. If $U$ and $V$ are open subsets of $X$ and $\text{cl}(U)\cap \text{cl}(V)=\emptyset$, then $\nu_1(U,V)$ and $\nu_2(U,V)$ are disjoint open subsets of $X$ such that $\text{cl}(U)\subseteq\nu_1(U,V)$ and $\text{cl}(V)\subseteq\nu_2(U,V)$.
\end{lemma}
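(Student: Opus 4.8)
The plan is to mimic the classical proof that regular + Lindelöf (or here, the countable-poset structure) implies normal, but to carry it out uniformly so that $\nu_1,\nu_2$ are explicit arithmetic functionals, using the pseudo-complement machinery of Lemma~\ref{complement} in place of the $\Pi_2^1$ enumerations of \cite{MS09}. First I would invoke Lemma~\ref{strongReg} to pass to a strongly regular presentation, so that for each $p\in P$ we have the set $D_p\subseteq P$ with $N_p=\bigcup_{q\in D_p}N_q$ and $\mathrm{cl}(N_q)\subseteq N_p$ for $q\in D_p$; crucially, by the proof of Lemma~\ref{strongReg} the relation ``$\mathrm{cl}(N_q)\subseteq N_p$'', equivalently $q^c\vee p=1$, is arithmetic, so $D_p$ is uniformly arithmetic in $p$. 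Given open $U,V$ with $\mathrm{cl}(U)\cap\mathrm{cl}(V)=\emptyset$, I would enumerate $B(U)=\{p\in P: N_p\subseteq U\}$ and $B(V)$ (arithmetic in the codes for $U,V$).

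Next I would shrink each basic piece using strong regularity: for $p\in B(U)$, since $\mathrm{cl}(U)\subseteq U^c{}^c$-type reasoning via Lemma~\ref{complement} lets me separate $\mathrm{cl}(N_p)$ from $\mathrm{cl}(V)$. Concretely, for each $p\in B(U)$ pick (uniformly, by least index) an element $q=\sigma(p)\in D_p$ with $\mathrm{cl}(N_q)\subseteq N_p$; then the open set $\nu_1(U,V):=N_{\{\sigma(p):p\in B(U)\}}$ will contain $\mathrm{cl}(U)$ provided the shrinking is done correctly. The honest way to get the containment $\mathrm{cl}(U)\subseteq\nu_1(U,V)$ is to note that $\mathrm{cl}(U)$ is covered by the $N_p$, $p\in B(U)$, and that each point of $\mathrm{cl}(U)$, lying in some $N_p$ with $p\in B(U)$, actually lies in some $N_q$ with $q\in D_p$, hence in $\nu_1(U,V)$. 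Symmetrically define $\nu_2(U,V)$ from $B(V)$. Disjointness is the point where the hypothesis $\mathrm{cl}(U)\cap\mathrm{cl}(V)=\emptyset$ is used together with the standard ``alternating back-and-forth shrinking'': one must arrange that the $q$ chosen for $U$ avoid $\mathrm{cl}(V)$ and the $q$ chosen for $V$ avoid $\mathrm{cl}(U)$. I would implement this by replacing $D_p$ with $D_p^V:=\{q\in D_p: \mathrm{cl}(N_q)\cap\mathrm{cl}(V)=\emptyset\}$, which by regularity still covers $N_p\cap U$ enough to cover $\mathrm{cl}(U)$ (since $\mathrm{cl}(U)$ is disjoint from $\mathrm{cl}(V)$, each point of $\mathrm{cl}(U)$ has a basic neighborhood whose closure misses $\mathrm{cl}(V)$, and such neighborhoods refine into $D_p^V$); the condition $\mathrm{cl}(N_q)\cap\mathrm{cl}(V)=\emptyset$ unwinds to $N_q^c\supseteq\mathrm{cl}(V)$, which is arithmetic. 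With $\nu_1$ built from $\bigcup_p D_p^V$ and $\nu_2$ from $\bigcup_p D_p^U$, any point in $\nu_1\cap\nu_2$ would lie in some $N_q$ with $\mathrm{cl}(N_q)\cap\mathrm{cl}(V)=\emptyset$ and in some $N_{q'}$ with $\mathrm{cl}(N_{q'})\cap\mathrm{cl}(U)=\emptyset$; chasing which of $U,V$ the point's witnesses force it into, using Lemma~\ref{complement}(1) that every point is in exactly one of $\mathrm{cl}(W)$ and $W^c$, yields a contradiction.

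The main obstacle I expect is verifying that the ``shrunk, $V$-avoiding'' cover $\{N_q: q\in D_p^V,\ p\in B(U)\}$ still covers all of $\mathrm{cl}(U)$ — the naive cover only obviously covers $U$, and one needs regularity plus the disjointness hypothesis to push the covering out to the closure. The clean way to handle this is: a point $m\in\mathrm{cl}(U)$ is, by $\mathrm{cl}(U)\cap\mathrm{cl}(V)=\emptyset$ and Lemma~\ref{complement}(1), in $V^c$; since $V^c$ is open, regularity gives a basic $N_{p_0}\ni m$ with $\mathrm{cl}(N_{p_0})\subseteq V^c$, hence $\mathrm{cl}(N_{p_0})\cap\mathrm{cl}(V)=\emptyset$; and since $m\in\mathrm{cl}(U)$, $N_{p_0}$ meets $U$, so some $p\in B(U)$ has $N_p\cap N_{p_0}\ne\emptyset$ — refining once more inside the meet and using strong regularity of this intersection lands $m$ in an appropriate $N_q$. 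Making this refinement uniform and arithmetic is the delicate bookkeeping; everything else (the definitions of $\nu_1,\nu_2$ as continuous/arithmetic functionals, disjointness, the two containments) is a routine unwinding of Lemma~\ref{complement} and the arithmetic form of $\mathrm{cl}(N_q)\subseteq N_p$ established in Lemma~\ref{strongReg}.
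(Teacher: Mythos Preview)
There is a genuine gap. As you define it, $\nu_1(U,V)$ is the union of $N_q$ with $q\in D_p$ and $p\in B(U)$; but $q\in D_p$ forces $N_q\subseteq N_p\subseteq U$, so $\nu_1(U,V)\subseteq U$. Hence $\mathrm{cl}(U)\subseteq\nu_1(U,V)$ would require $\mathrm{cl}(U)\subseteq U$, i.e.\ that $U$ is already closed. You notice this obstacle, but your attempted fix cannot succeed: for a boundary point $m\in\mathrm{cl}(U)\setminus U$, no $p\in B(U)$ has $m\in N_p$, so no $q\in D_p$ can have $m\in N_q$, no matter how you ``refine inside the meet''. The bookkeeping is not delicate; the construction is pointed in the wrong direction.

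The paper's proof resolves this by covering from the \emph{outside} rather than from inside $U$. By Lemma~\ref{complement}(1), $\mathrm{cl}(U)\subseteq V^c$, and $V^c$ is open; so one enumerates $\bigcup_{p\in V^c}D_p$ as $\{e(V^c,n)\}_{n\in\omega}$, obtaining a countable cover of $\mathrm{cl}(U)$ by basic opens each of whose closures lies inside $V^c$. Symmetrically $\{e(U^c,n)\}$ covers $\mathrm{cl}(V)$. Now, however, disjointness is no longer automatic (your easy disjointness argument secretly used $\nu_1\subseteq U$, $\nu_2\subseteq V$, which is exactly what has been abandoned). The paper therefore uses the classical alternating-subtraction trick, carried out in the lattice via pseudo-complement: set
\[
U(n)=e(U^c,n)\wedge\bigwedge_{i\le n}(e(V^c,i))^c,\qquad
V(n)=e(V^c,n)\wedge\bigwedge_{i\le n}(e(U^c,i))^c,
\]
and let $\nu_1,\nu_2$ be the unions of the $V(n)$, $U(n)$ respectively. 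The point is that these subtractions remove nothing from $\mathrm{cl}(U)$ (since $\mathrm{cl}(U)$ misses each $\mathrm{cl}(e(U^c,i))$, hence lies in each $(e(U^c,i))^c$), while any putative common point is excluded by comparing the indices of its $U$- and $V$-witnesses. All of this is arithmetic in the codes because $\wedge$ and ${}^c$ are part of the hybrid structure.
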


\begin{proof}
For an open set $W$, let $\{e(W,n)\}_{n\in\omega}$ be an enumeration of the union of $D_p$ for $p\in W$ in increasing order. Let 
$$U(n)=e(U^c,n)\cap\bigcap_{i\leq n} (e(V^c,i))^c$$
$$V(n)=e(V^c,n)\cap\bigcap_{i\leq n} (e(U^c,i))^c$$
be the corresponding elements of $L$.

Define
$$\nu_1(U,V)=\{V(n)\}_{n\in\omega},$$
$$\nu_2(U,V)=\{C(n)\}_{n\in\omega}.$$

The classical proof that these functionals have the desired properties now goes through in $\text{ACA}_0$.
To be specific, let $x\in \text{cl}(U)$.
As $\text{cl}(U)\cap \text{cl}(V)=\emptyset$, this means that $x\in e(V^c,n)$ for some $n$. 
Furthermore, if $x\not\in e(U^c,i)^c$ this means that every open set containing $x$ intersects $e(U^c,i)$.
However, this means that $x\in \text{cl}(U^c,i)\subseteq U^c$.
This is a contradiction to the fact that $x\in \text{cl}(U)$.
Therefore, $x\in V(n)$ for some $n$.
A similar and symmetric proof demonstrates this point for $V$.

To see that the two open sets are disjoint, note that if $x\in U(n)$ for some $n$ it is in $e(U^c,n)$ and each of the $(e(V^c,i))^c$ for $i\leq n$.
This means that $x$ is not in $V(m)$ for $m\leq n$ as $(e(U^c,m))^c\cap e(U^c,m)= \emptyset$.
Furthermore, $x$ is not in $V(m)$ for $m\geq n$ as $(e(V^c,m))^c\cap e(V^c,m)= \emptyset$.
This completes the proof.

\end{proof}

\begin{lemma}
($\text{ACA}_0$) Fix $p\in P$ and $q\in R_p$. Let D be the set of dyadic rational numbers in the interval $[0, 1]$. There is a map $g_{p,q}$ from $D$ to coded open subsets of $X$ such that $g_{p,q}(0) = \{q\}$, $g_{p,q}(1) = \{p\}$, and $\text{cl}(g_{p,q}(k)) \subseteq g_{p,q}(k')$ whenever $k < k'$.
\end{lemma}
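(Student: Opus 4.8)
The plan is to carry out the classical Urysohn dyadic interpolation that underlies the analogous step of \cite{MS09}, but to perform every stage with the arithmetic functionals already available, so that the whole map lands in $\text{ACA}_0$. I will build $g_{p,q}$ on the level-$n$ dyadics (those of the form $m/2^n$, $0\le m\le 2^n$) by recursion on $n$, maintaining the invariant that $\text{cl}(g_{p,q}(k))\subseteq g_{p,q}(k')$ for consecutive level-$n$ dyadics $k<k'$. The base case $n=0$ is exactly the hypothesis: set $g_{p,q}(0)=\{q\}$ and $g_{p,q}(1)=\{p\}$, which code $N_q$ and $N_p$ respectively, and $\text{cl}(N_q)\subseteq N_p$ holds since $q\in R_p$.

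The engine of the recursion is a single arithmetic interpolation functional $I$ with the property that whenever $U,V$ are coded open sets with $\text{cl}(U)\subseteq V$, the set $I(U,V)$ is a coded open set with $\text{cl}(U)\subseteq I(U,V)$ and $\text{cl}(I(U,V))\subseteq V$. To build $I$ I would feed the pair $(U,V^c)$ to the normality functionals $\nu_1,\nu_2$ of the preceding lemma. Their hypothesis is met: a point lying in both $\text{cl}(U)$ and $\text{cl}(V^c)$ would lie in $V$ (as $\text{cl}(U)\subseteq V$) and in $\text{cl}(V^c)$, contradicting Lemma \ref{complement}(2). Set $I(U,V)=\nu_1(U,V^c)$; then $\text{cl}(U)\subseteq I(U,V)$ by the conclusion of that lemma, and since $I(U,V)$ is disjoint from the open set $\nu_2(U,V^c)\supseteq\text{cl}(V^c)$, its closure lies in the closed set complementary to $\nu_2(U,V^c)$ and hence misses $\text{cl}(V^c)$, so Lemma \ref{complement}(1) pins $\text{cl}(I(U,V))$ inside $V$ (using that the $N_p$, and then every set produced in the recursion, may be taken regular open — a harmless normalization discussed below). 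The only operations used are the pseudo-complement on $L$ and on coded open sets, the enumerations of $\bigcup_{p\in W}R_p$ entering $\nu_1,\nu_2$, and finite Boolean operations, so $I$ is arithmetic in its arguments and the ambient hybrid MF data.

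Given $I$, the recursion is routine: assuming $g_{p,q}$ defined on the level-$n$ dyadics with the invariant, for each new point $(2m+1)/2^{n+1}$ between the consecutive level-$n$ dyadics $k^{-}=m/2^n$ and $k^{+}=(m+1)/2^n$ put $g_{p,q}\big((2m+1)/2^{n+1}\big)=I\big(g_{p,q}(k^{-}),g_{p,q}(k^{+})\big)$; the hypothesis of $I$ is supplied by the invariant, and its conclusion restores the invariant one level finer. The full statement $\text{cl}(g_{p,q}(k))\subseteq g_{p,q}(k')$ for arbitrary $k<k'$ in $D$ then follows by choosing a common level $n$ with $k,k'$ both level-$n$ dyadics and chaining the consecutive-pair inclusions through the finitely many intermediate dyadics. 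Finally, $\text{ACA}_0$ suffices because each level of the recursion adjoins only finitely much new data, computed from the previous level by the fixed arithmetic functional $I$ together with the hybrid MF structure and a strong-regularity sequence (which we have by Lemma \ref{strongReg}); hence the graph of $g_{p,q}$ is arithmetically definable from these parameters and exists by arithmetical comprehension.

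The step I expect to be the main obstacle is precisely this last one: making sure that iterating $I$ through all $\omega$ levels does not push the complexity of $g_{p,q}$ past the arithmetic sets. A naive implementation of $\nu_1,\nu_2$ spends a Turing jump at each application — through enumerating pseudo-complements — which, compounded over the levels, would make $g_{p,q}$ behave like an $\omega$-fold jump and escape $\text{ACA}_0$; this difficulty is entirely invisible in \cite{MS09}, where the construction sits inside $\Pi^1_2$-$\text{CA}_0$ and costs nothing to state, and it is exactly the place where the hybrid setting has to be exploited. The fix I would pursue is to arrange the intermediate open sets so that the recursion is uniformly $\Sigma^0_1$ in the hybrid data — presenting each $g_{p,q}(k)$ through joins of basic opens and the strong-regularity sequence rather than through compounded pseudo-complements, and letting the pseudo-complement enter only in the bounded, one-shot verification of the separation hypotheses. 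A secondary, purely bookkeeping point is the passage from $\text{cl}(I(U,V))\subseteq V^{cc}$ to $\text{cl}(I(U,V))\subseteq V$; this is dispatched by the regular-open normalization mentioned above, or equivalently by stating the interpolation for regular-open sets and checking that the endpoints $N_q,N_p$ and all $\nu_1$-outputs may be taken of that form, which does not affect the subsequent metrization argument.
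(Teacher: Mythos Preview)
Your construction is identical to the paper's: set $U_\ell=\nu_1\bigl(U_{k^-},(U_{k^+})^c\bigr)$ at each new midpoint $\ell$ and carry the invariant $\text{cl}(U_k)\subseteq U_{k'}$ for consecutive dyadics; the paper does not name a separate interpolation functional $I$, but it performs exactly this step, with the same verification that $\text{cl}(U_{k^-})\cap\text{cl}((U_{k^+})^c)=\emptyset$ and the same appeal to $\nu_2$ for the upper containment.

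Where you go beyond the paper is in flagging the jump-compounding issue. The paper's entire justification for remaining in $\text{ACA}_0$ is the remark that the invariant $\text{cl}(U_k)\subseteq U_{k'}$ is an arithmetic condition (checkable as $U_k^c\vee U_{k'}=1$), followed by ``This means that we can perform our induction within $\text{ACA}_0$.'' That sentence speaks only to verifiability of the invariant at each stage, not to the definability of the entire sequence $(U_k)_{k\in D}$, and so it does not meet the concern you raise about iterated pseudo-complements of coded open sets costing a jump per level. You have therefore matched the paper's argument and correctly isolated a point the paper leaves unargued; the fix you sketch---reorganizing the recursion so that the codes stay uniformly $\Sigma^0_1$ in the hybrid data and the strong-regularity witness---is genuine additional work not supplied in the paper. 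Your ``secondary bookkeeping'' worry about landing in $V^{cc}$ rather than $V$ is likewise present in the paper: its line ``every point not in $\nu_2(U_k,U_{k'}^c)$ must be in $U_{k'}$'' only yields $U_{k'}^{cc}$ via Lemma~\ref{complement}(1), and the paper does not comment on the discrepancy.
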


\begin{proof}
Let $U_0= \{q\}$ and $U_1= \{p\}$. We define the rest of the $U_k$ for dyadic $k$ by induction.
Throughout the induction we will maintain that for dyadic rationals $k<k'$, $\text{cl}(U_k)\subseteq U_{k'}$.
Let us note that this is an arithmetic condition.
In particular, we can check if $U_k^c\cup U_{k'}=1$.
This means that we can preform our induction within $\text{ACA}_0$.

We assume that the map has already been defined for all dyadic $k$ with reduced form $\frac{a}{2^m}$.
We show how to define the map on dyadic $\ell$ with reduced form $\frac{b}{2^{m+1}}$.
Select $k<k'$ with reduced form $\frac{a}{2^m}$ that are the closest to $\ell$.
We have that $\ell = \frac{k+k'}{2}$.
By induction, we assume that $\text{cl}(U_k)\subseteq U_{k'}$.
In other words, we have that $\text{cl}(U_k)\cap \text{cl}(U_{k'}^c)=\emptyset$, as otherwise there would be a point in both $U_{k'}$ and $U^c_{k'}$.
This means that we define the $U_\ell$ as $U_\ell= \nu_1(U_k,U_{k'}^c)$.
By the previous lemma this can be done in $\text{ACA}_0$.
Furthermore, we are guaranteed that $\text{cl}(U_k)\subseteq U_\ell$.
Lastly, we claim that $\text{cl}(U_\ell)$ is contained in $U_{k'}$.
Note that $\nu_2(U_k,U_{k'}^c)$ is an open disjoint from $U_{\ell}$ by construction.
Furthermore, $\nu_2(U_k,U_{k'}^c)$ contains the closure of $U_{k'}^c$.
Therefore, every point not in  $\nu_2(U_k,U_{k'}^c)$ must be in $U_{k'}$, and in particular every point in $U_{\ell}$ is in $U_{k'}$ as required.
\end{proof}

The next three results have arguments that are quite similar to those found in \cite{MS09} (and indeed they are also quite similar to the classical arguments).
There are not many details different from the argument for ordinary MF spaces and hybrid MF spaces because the arguments produced in \cite{MS09} already go through at the lower complexity level of $\text{ACA}_0$.
The proofs of these intermediate lemmas are still produced here, but not in full detail; one ought to examine \cite{MS09} Lemmas 4.7-4.10 or \cite{MThesis} Lemmas 4.3.12, 4.3.13, and 4.3.18 for further details.

\begin{lemma}
($\text{ACA}_0$) There is a sequence 
$\langle f_{p,q}\vert p\in P,q\in R_p\rangle$
of continuous functions from $P \to [0,\infty)$ such that for all $p \in P$ and $q \in R_p$ we have $f_{p,q} (N_q) = 0$ and $f_{p,q} (X - N_p) = 1$.
\end{lemma}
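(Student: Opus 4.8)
The plan is to run the classical Urysohn construction with the family $g_{p,q}$ from the preceding lemma playing the role of the level sets. Fix $p\in P$ and $q\in R_p$, and abbreviate $U_k:=g_{p,q}(k)$ for $k$ in the set $D$ of dyadic rationals in $[0,1]$, so that $U_0=N_q$, $U_1=N_p$, and $\text{cl}(U_k)\subseteq U_{k'}$ whenever $k<k'$. I would define
$$f_{p,q}(x)=\inf\big(\{k\in D : x\in U_k\}\cup\{1\}\big).$$
Because $\text{cl}(U_k)\subseteq U_{k'}$ for $k<k'$, the set $\{k\in D:x\in U_k\}$ is upward closed in $D$, so $f_{p,q}(x)$ is the infimum of an up-set. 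The two boundary conditions are then immediate: if $x\in N_q=U_0$ then $0$ lies in the set, so $f_{p,q}(x)=0$; and if $x\notin N_p=U_1$ then $U_k\subseteq\text{cl}(U_k)\subseteq U_1=N_p$ for every $k<1$, so no $k\in D$ has $x\in U_k$ and hence $f_{p,q}(x)=1$.

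The real content is that $f_{p,q}$ is a coded continuous function in the sense used in \cite{MS09} and that the sequence of such codes can be formed in $\text{ACA}_0$. For this I would exhibit the preimages of the sub-basic open sets of $[0,\infty)$ as uniformly coded open sets. Using the up-set property, $f_{p,q}(x)<a$ iff $x\in U_k$ for some $k\in D$ with $k<a$, so $f_{p,q}^{-1}\big([0,a)\big)=\bigcup_{k\in D,\,k<a}U_k$. In the other direction, $f_{p,q}(x)>a$ iff there is $k''\in D$ with $a<k''$ and $x\notin U_{k''}$; choosing $k'''\in D$ with $a<k'''<k''$ gives $\text{cl}(U_{k'''})\subseteq U_{k''}$, hence $x\notin\text{cl}(U_{k'''})$, and by Lemma \ref{complement}(1) the points not in $\text{cl}(U_{k'''})$ are exactly the points of $U_{k'''}^c$; conversely any point of $U_{k'''}^c$ avoids $U_{k'''}$, forcing $f_{p,q}>a$ there. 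Thus $f_{p,q}^{-1}\big((a,\infty)\big)=\bigcup_{k'''\in D,\,k'''>a}U_{k'''}^c$. Both descriptions are uniform in the rational $a$, in the pair $(p,q)$, and in the given sequence $g_{p,q}$, and since $U_k^c$ is obtained from $U_k$ by the pseudo-complement operation on reals — available in $\text{ACA}_0$ by the discussion around Lemma \ref{complement} — the code for each $f_{p,q}$ is arithmetic in the data, so the whole sequence $\langle f_{p,q}\rangle$ exists by arithmetical comprehension.

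The only friction I anticipate is bookkeeping rather than mathematics: spelling out the precise notion of a code for a continuous map $X\to[0,\infty)$ inherited from \cite{MS09} so that the two preimage computations literally witness continuity, and checking that the convention $\inf\emptyset=1$ introduces no discontinuity ``at the top'' — which it does not, precisely because $U_1=N_p$ is itself one of the level sets rather than all of $X$. Since nothing beyond Lemma \ref{complement} and the level-set family of the previous lemma is used, the argument stays inside $\text{ACA}_0$, and I would keep the write-up brief, deferring the routine verifications to \cite{MS09} Lemmas 4.7--4.10 and \cite{MThesis}.
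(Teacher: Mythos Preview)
Your proposal is correct and is essentially the paper's own argument: define $f_{p,q}(x)$ as the infimum of the dyadic levels $k$ with $x\in U_k$, verify the boundary values at $N_q$ and $X\setminus N_p$, and witness continuity by expressing $\{f_{p,q}<a\}$ as $\bigcup_{k<a}U_k$ and $\{f_{p,q}>a\}$ as $\bigcup_{k>a}U_k^{\,c}$ using the pseudo-complement lemma. The only cosmetic difference is that the paper handles the empty-infimum case by extending $g_{p,q}$ to all positive dyadics (setting $U_k=X$ for $k>1$) rather than adjoining $\{1\}$ to the set whose infimum is taken; the effect is identical.
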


\begin{proof}
We can form the set $\langle g_{p,q}\vert p\in P,q\in R_p\rangle$ for all $p$ and $q$ by the previous lemma.
We extend $ g_{p,q}$ to all positive dyadic rationals by letting $ g_{p,q}(k)$ be the whole space if $k> 1$.
As in the previous lemma, $U_k$ is meant to indicate the open set coded by $ g_{p,q}(k)$.
Let 
\[f_{p,q}=\inf\{k\in D^+\vert x\in U_k\}.\]
It is clear that these $f_{p,q}$ satisfy the desired conclusion.
This can be executed in $\text{ACA}_0$.
$f_{p,q}(x)<k$ if and only if there is some $k'<k$ with $x\in U_{k'}$ if and only if there is $r\in P$ with $r\in x\cap U_{k'}$.
To see $f_{p,q}(x)>k$ you similarly check for a $k''>k$ with $x\in U_{k'}^c$ which can be seen by looking for an  $r\in P$ with $r\in x\cap U_{k''}^c$.
This is enough to define the output of $f$.
\end{proof}

From this, we get the desired metrizability.

\begin{proposition}\label{metriz}
($\text{ACA}_0$) Every hybrid MF space is metrizable.
\end{proposition}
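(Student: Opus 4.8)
The plan is to run the classical Urysohn metrization argument in this setting: use the separating family of continuous functions produced by the preceding lemma to embed $X$ into the Hilbert cube $[0,1]^\omega$, and pull the Hilbert-cube metric back to $X$. The only thing that needs real care is that every step remains available in $\text{ACA}_0$, and this is exactly what the earlier lemmas of this section were arranged to ensure.

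Concretely, I would fix an enumeration $\langle f_n\mid n\in\omega\rangle$ of the sequence $\langle f_{p,q}\mid p\in P,\ q\in D_p\rangle$ from the preceding lemma, where $D_p$ is the strong-regularity data, which exists because $X$ is regular (Lemma~\ref{strongReg}); alongside $f_n$ we record the associated $p_n\in P$ and $q_n\in D_{p_n}$, so that $f_n$ is continuous and $[0,1]$-valued with $f_n(N_{q_n})=\{0\}$ and $f_n(X-N_{p_n})=\{1\}$. Then I would set, for maximal filters $x,y$ of $X$,
$$ d(x,y)\ =\ \sum_{n\in\omega} 2^{-n}\,\bigl\lvert f_n(x)-f_n(y)\bigr\rvert . $$
Since the maximal filters of $X$ have an arithmetic definition (Lemma~\ref{arithPoint}) and each $f_n$ is given by an arithmetic formula, $\text{ACA}_0$ can form $d$, and the pseudometric axioms (symmetry and the triangle inequality) follow in $\text{ACA}_0$ from the corresponding facts about the partial sums.

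It then remains to check that $d$ is a metric and that it induces the poset topology; the cleanest packaging is that the map $F(x)=\langle f_n(x)\rangle_{n\in\omega}$ is a homeomorphism of $X$ onto a subspace of $[0,1]^\omega$, since $d$ is by construction the pullback along $F$ of the standard Hilbert-cube metric. For injectivity, if $x\neq y$ then, the two filters being distinct and both maximal, neither contains the other, so there is $p\in P$ with $x\in N_p$ and $y\notin N_p$; picking $q\in D_p$ with $x\in N_q$ (possible since $N_p=\bigcup_{q\in D_p}N_q$) and taking the index $n$ with $f_n=f_{p,q}$, we get $f_n(x)=0\neq 1=f_n(y)$, so $F(x)\neq F(y)$ and $d(x,y)\geq 2^{-n}>0$. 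Continuity of $F$ is just coordinatewise continuity of the $f_n$. For openness onto the image, the key identity is
$$ N_p\ =\ \bigcup_{q\in D_p}\{\,y : f_{p,q}(y)<1\,\}, $$
because $f_{p,q}(y)<1$ forces $y\notin X-N_p$ while, conversely, any $y\in N_p$ lies in some $N_q$ with $q\in D_p$, where $f_{p,q}(y)=0$; hence the image of the basic open $N_p$ equals $F(X)$ intersected with the open subset $\bigcup_{q\in D_p}\{z\in[0,1]^\omega : z_{n(p,q)}<1\}$ of the Hilbert cube, so $F$ is open onto $F(X)$. Each of these verifications is an arithmetical statement about arithmetically defined objects and goes through in $\text{ACA}_0$ word for word as classically. (If one wants the output in the form of Definition~\ref{fcode}, namely that $X$ is homeomorphic to a complete separable metric space, one argues as usual that $F(X)$ is $G_\delta$ in $[0,1]^\omega$; this too stays within $\text{ACA}_0$.)

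I do not expect a genuine obstacle here: the topological content is exactly Urysohn's argument, essentially as in \cite{MS09}. The one thing to watch is the complexity bookkeeping, and the preceding lemmas have already removed the two features of ordinary MF spaces that push that bookkeeping up to $\Pi_2^1-\text{CA}_0$ — namely that strong regularity is not available for free (so the functions $f_{p,q}$ cannot even be formed) and that the point set is only $\Pi_1^1$. With Lemma~\ref{strongReg} supplying the strong-regularity data and Lemma~\ref{arithPoint} an arithmetic definition of the points, the family $\langle f_{p,q}\rangle$, the metric $d$, and all the verifications above are arithmetical, so the whole proof is carried out in $\text{ACA}_0$ and the metric witnessing metrizability is itself arithmetically defined.
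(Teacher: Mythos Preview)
Your proposal is correct and follows essentially the same route as the paper: reindex the Urysohn separating functions $f_{p,q}$ as $\langle f_n\rangle$, set $d(x,y)=\sum_n 2^{-n}\lvert f_n(x)-f_n(y)\rvert$, and invoke the classical Urysohn argument; the paper's own proof is in fact terser than yours, omitting the explicit verifications of injectivity and openness that you spell out. One caution: your closing parenthetical that the $G_\delta$ claim ``too stays within $\text{ACA}_0$'' is precisely the content of the subsequent complete-metrization theorem and is not as routine as you suggest --- but it is not needed for this proposition, which concerns metrizability only.
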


\begin{proof}
Reindex the function from the above lemma to be $f_n$ and let
$$d(x,y)=\sum_{n\in\omega} 2^{-n} \vert f_n(x) - f_n(y)\vert.$$
The classical proof of Uryshon's metrization theorem is straightforward and shows that this is a metric.
\end{proof}

At this point we pause to note that the above development answers Question 4.1 from \cite{MS09}, in a certain sense. 
They ask what can be said about the reverse mathematical strength of stating that every strongly regular MF space has a (not necessarily complete) metric.
By moving to a different representation meant to easily capture topological notions like strong regularity, we are able to show this in the base theory of $\text{ACA}_0$.
Or course, in a stricter sense their question remains open.

We now wish to show that hybrid MF spaces are completely metrizable.
It is in fact this theorem that was shown in \cite{MS09} to be equivalent to $\Pi_2^1-\text{CA}_0$.

We use the following lemma to demonstrate this, which states that in $\text{RCA}_0$ every $G_\delta$ subset of a completely metrizable space is completely metrizable.

\begin{lemma}\label{gdelta}
($\text{RCA}_0$) If $(U_i)$ is a sequence of open sets in a complete, separable metric space $(\hat{A},D)$, the there is a complete metric $d'$ such on $\bigcap_i U_i$ that agrees with the subspace topology on $\bigcap_i U_i$. Furthermore, if $D\subseteq \bigcap_i U_i$ is computable and dense then this topology is canonically isomorphic to $\hat{D}$.
\end{lemma}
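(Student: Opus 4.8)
The plan is to adapt the classical theorem of Alexandrov that a $G_\delta$ subset of a completely metrizable space is completely metrizable, being careful to keep the construction within $\mathrm{RCA}_0$. First I would record the standard trick: given a single open set $U \subseteq \hat A$ with $U \neq \hat A$, the function $x \mapsto 1/D(x, \hat A \setminus U)$ is continuous and unbounded near the boundary of $U$, so if we define on $U$ the metric $d_U(x,y) = D(x,y) + |1/D(x,\hat A \setminus U) - 1/D(y,\hat A\setminus U)|$, then $d_U$ induces the subspace topology on $U$ and is complete, because a $d_U$-Cauchy sequence cannot approach the boundary. The distance to a closed set is computable from the code of the open set $U$ and a dense sequence (it is the infimum over the dense points not in $U$, with the usual approximation), so $d_U$ is a code for a complete metric in $\mathrm{RCA}_0$; one then carries out the standard completion to get $\widehat{(U, d_U)}$ and checks that the identity on $D \cap U$ witnesses a canonical isomorphism onto the intended space. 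For the sequence $(U_i)$, I would iterate this: on $\bigcap_i U_i$ set
$$d'(x,y) = D(x,y) + \sum_{i\in\omega} 2^{-i}\, \min\!\big(1,\ |1/D(x,\hat A\setminus U_i) - 1/D(y,\hat A\setminus U_i)|\big),$$
handling the trivial cases $U_i = \hat A$ separately (that term is $0$). The point-finiteness of the tail, together with the $2^{-i}$ weights, makes the sum a legitimate $\mathrm{RCA}_0$-definable real-valued function of $(x,y)$, and standard estimates show it is a metric inducing the subspace topology.

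The key steps, in order: (1) show $d'$ is a pseudometric and in fact a metric, and that the identity map $(\bigcap_i U_i, D) \to (\bigcap_i U_i, d')$ is a homeomorphism — this is the routine continuity-in-both-directions argument, using that each summand is $D$-continuous and that on a fixed $D$-ball only finitely many summands matter up to $\varepsilon$; (2) show $d'$ is complete: a $d'$-Cauchy sequence $(x_n)$ is $D$-Cauchy, hence $D$-converges to some $x\in\hat A$, and for each $i$ the sequence $1/D(x_n,\hat A\setminus U_i)$ is Cauchy hence bounded, which forces $D(x,\hat A\setminus U_i)>0$, i.e. $x\in U_i$; thus $x\in\bigcap_i U_i$ and $x_n \to x$ in $d'$; (3) formalize the completion of $(\bigcap_i U_i, d')$ inside $\mathrm{RCA}_0$ using the given dense set $D$ (when such a computable dense $D\subseteq\bigcap_i U_i$ exists), taking the points of $\hat D$ to be the rapidly-converging Cauchy sequences from $D$, and exhibit the canonical isometry identifying this completion with $\bigcap_i U_i$ as a subspace of $\hat A$; (4) verify that all of the above — the definition of $d'$, the verification that it codes a complete separable metric space in the sense of $\mathrm{RCA}_0$, and the isomorphism — uses only $\Delta^0_1$ comprehension and $\Sigma^0_1$ induction, the latter entering only in the bookkeeping for the geometric tail.

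The main obstacle I expect is step (3) together with the bookkeeping in step (4): namely making the completion and the ``canonically isomorphic to $\hat D$'' clause precise and verifying it goes through in $\mathrm{RCA}_0$ rather than something stronger. The subtlety is that $\bigcap_i U_i$ is only a $G_\delta$, so membership in it is genuinely $\Pi^0_2$, and one must be sure that the \emph{metric space code} $\hat D$ we produce does not secretly require deciding that $\Pi^0_2$ predicate; the resolution is that the code $\hat D$ records only the dense sequence $D$ and the $d'$-distances between its members — both of which are $\mathrm{RCA}_0$-computable from the given data — and the fact that the completion of this code ``is'' $\bigcap_i U_i$ is then a theorem proved \emph{about} the code, not part of its definition. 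Ensuring the weak distance-to-closed-set function and the infinite sum are well-defined reals in $\mathrm{RCA}_0$ (as opposed to needing $\mathrm{ACA}_0$ to evaluate an infimum) is the other place to be careful; here one uses that a complete separable metric space comes with a dense sequence, so the infimum defining $D(x,\hat A\setminus U_i)$ is an infimum over a countable set of rationals approximable from above and below, hence a legitimate real.
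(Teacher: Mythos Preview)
Your outline is precisely the classical Alexandrov argument the paper cites (Kechris, Theorem~3.11, with the reverse-math details deferred to Mummert's thesis), so the approach matches. There is, however, one concrete gap: your handling of the distance-to-complement $D(x,\hat A\setminus U_i)$ in $\mathrm{RCA}_0$ does not work as stated. Your proposed computation---the infimum over dense points not in $U_i$---fails because membership of a point in the complement of a coded open set is $\Pi^0_1$, not decidable, so you cannot effectively enumerate those points; worse, in the intended application the dense sequence is assumed to lie \emph{inside} $\bigcap_i U_i$, so there are no such points at all and your infimum is vacuous. The standard repair is to replace the distance function by any $\mathrm{RCA}_0$-computable continuous function that is positive exactly on $U_i$: if $U_i=\bigcup_n B(a_n,r_n)$ is the given code, set $g_i(x)=\sum_n 2^{-n}\max\bigl(0,\,r_n-D(x,a_n)\bigr)$, which is plainly computable from the code and from $x$. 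Substituting $1/g_i(x)$ for $1/D(x,\hat A\setminus U_i)$ in your formula for $d'$ makes the rest of your sketch---steps (1)--(4), including the completeness argument and the coding of $\hat D$---go through without change.
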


\begin{proof}
This follows from the classical proof see e.g. \cite{KBook} Theorem 3.11.
More details for the context of reverse mathematics can be found in \cite{MThesis} Lemma 4.3.18.
\end{proof}

Given a hybrid MF space $X$ we let $A\subseteq X$ be a computable dense set definable in $\text{ACA}_0$.
Note that this can be found; given non-empty $p\in P$ we find a maximal filter containing $p$ by recursion.
Let $p_0=p$, and let $p_{i+1}$ be the least element of $P$ that has non-zero intersection with $p_0,\cdots, p_i$ and non-zero intersection with some sequence of elements in $D_{p_0}\times\cdots\times D_{p_i}$.
By Lemma \ref{arithPoint} this is enough to guarantee that we build a maximal filter in the limit.
We show first that $X$ is isometrically embeddable in $\hat{A}$, the completion of $A$, and then we show that the image of this embedding is $G_\delta$.

To formalize the idea of a continuous map between an MF space and a metric space we take the following definition.

\begin{definition}\label{fcode}
A continuous function code $F$ between a hybrid MF space based on the poset $P$ and a complete metric space given by a dense set $A$ is given by a set of pairs $\langle p,\langle a,r\rangle \rangle \in P\times A\times \mathbb{R}$. $F$ induces a partial function from $MF(P)$ to $\hat{A}$. We say that $F$ is defined at $m\in MF(P)$ if for every sequence of elements in $m_i \in m$  there is $\langle m_i,\langle a_i,r_i\rangle \rangle \in F$ with $r_i\to 0$. Furthermore, for every such $\langle m_i,\langle a_i,r_i\rangle \rangle \in F$ with $r_i\to 0$, the $a_i$ are a Cauchy sequence in the same equivalence class. In this case we say that $F$ sends $m$ to this unique equivalence class of Cauchy sequences. A coded continuous function is one that is a total function induced by a continuous function code.
\end{definition}

It is not too difficult to see in ZFC that every continuous function between a hybrid MF space and a complete metric space is induced by such a code.
It is enough to check that given such a function $f$, $\{\langle p,\langle a,r\rangle \rangle \in P\times A\times \mathbb{R} \vert f(N_p)\subseteq B_r(a)\}$ is a continuous function code for $f$.

\begin{lemma}\label{embed}
($\text{ACA}_0$) If $X$ is a metrizable countably based hybrid MF space, there is a separable complete metric space $\hat{A}$ and a continuous, open bijection $h$ between $X$ and a dense subset of $A$. Furthermore, $h$ is an isometry.
\end{lemma}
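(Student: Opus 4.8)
The plan is to realize $X$, with the metric $d(x,y)=\sum_n 2^{-n}|f_n(x)-f_n(y)|$ of Proposition~\ref{metriz}, as a dense metric subspace of the completion $\hat A$ of its countable dense set $A$, with $h$ the induced isometric embedding. The first point is that $d$ restricted to $A$ is a \emph{coded} separable metric: writing $f_n=f_{p_n,q_n}$ and $U^{(n)}_k=g_{p_n,q_n}(k)$ for the coded opens used to build these Urysohn functions, the earlier lemmas characterized ``$f_n(x)<k$'' and ``$f_n(x)>k$'' by arithmetic conditions on $x$ (using the $U^{(n)}_k$ and Lemma~\ref{complement}), and the points of $A$ are uniformly arithmetic by Lemma~\ref{arithPoint}, so $d(a,b)$ is computed arithmetically from codes for $a,b\in A$. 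Thus $\text{ACA}_0$ forms the completion $\hat A$, a complete separable metric space in which the canonical copy of $A$ is dense. Moreover $A$ is $d$-dense in $X$, since by Proposition~\ref{metriz} the $d$-balls are a basis for the poset topology: given a point $x$ and $\varepsilon>0$ there is a basic $N_p$ with $x\in N_p\subseteq B^d_\varepsilon(x)$, and then $m_p\in A\cap N_p$ gives $d(x,m_p)<\varepsilon$.

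Next I would define $h\colon X\to\hat A$. For a point $x$ let $a_i\in A$ be least with $d(x,a_i)<2^{-i}$ (arithmetic in $x$; such $a_i$ exists by the density above), and let $h(x)$ be the point of $\hat A$ named by the Cauchy sequence $(a_i)_i$; any $d$-approximating sequence from $A$ would name the same point, by the triangle inequality. Since $d(a_i,x)\to 0$, the triangle inequality also shows $h$ is an isometry from $(X,d)$ into $\hat A$, hence injective; $h$ restricted to $A$ is the canonical inclusion $A\hookrightarrow\hat A$, so $h(X)\supseteq A$ is dense in $\hat A$; and since $d$ generates the topology of $X$ while the restricted metric generates the subspace topology on $h(X)$, $h$ is a homeomorphism of $X$ onto $h(X)$ — in particular a continuous, open bijection onto a dense subset of $\hat A$, and an isometry. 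It then remains to record $h$, and its inverse on $h(X)$, as coded continuous functions in the sense of Definition~\ref{fcode}; the natural code for $h$ is $F=\bigl\{\langle p,\langle a,r\rangle\rangle : a\in A\cap N_p,\ N_p\subseteq B^d_r(a)\bigr\}$, and one checks that it induces $h$ as a total function on $MF(P)$, totality coming from the fact that along any shrinking sequence $p_i\in m$ the $d$-diameters of the $N_{p_i}$ tend to $0$ (again because $d$ generates the topology).

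The one delicate step — and precisely where the hybrid formalism earns its keep — is verifying that $F$ is available in $\text{ACA}_0$. Read naively, ``$N_p\subseteq B^d_r(a)$'' asserts that \emph{every} maximal filter through $p$ lies within $d$-distance $r$ of $a$, a quantification over maximal filters that is only $\Pi_1^1$ on its face; in the plain MF setting of \cite{MS09} the analogous closure-containment conditions are genuinely $\Pi_2^1$-complete. I would make it arithmetic by unwinding the definition of $d$: it suffices to pin down finitely many $f_n$ on $N_p$, and ``$f_n<c$ throughout $N_p$'' / ``$f_n>c$ throughout $N_p$'' can be replaced by finitary conditions in $L$ about the coded opens $U^{(n)}_k$ and their pseudocomplements, reducing — via Lemma~\ref{complement} and the fact that basic closure-inclusions $\text{cl}(N_q)\subseteq N_{q'}$ are arithmetic (as used in the proof of Lemma~\ref{strongReg}) — to arithmetic predicates of $p$. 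Granting this, everything else is the classical Urysohn-embedding argument carried out verbatim over $\text{ACA}_0$.
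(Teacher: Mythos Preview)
Your overall strategy---fix the metric $d$ from Proposition~\ref{metriz}, take a countable dense $A\subseteq X$, form $\hat A$, and code the isometric inclusion $h$ by a set of triples $\langle p,\langle a,r\rangle\rangle$---is exactly the paper's. The one substantive divergence is at the ``delicate step'' you flag: how to make the defining condition on the code arithmetic.

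You propose to tame ``$N_p\subseteq B^d_r(a)$'' by unwinding $d=\sum 2^{-n}|f_n(\cdot)-f_n(\cdot)|$ and replacing it with finitary lattice conditions on the coded opens $U^{(n)}_k$ and their pseudocomplements. This is plausible, but it is real work you have not actually done: once you replace the containment by a sufficient arithmetic surrogate, you must re-verify totality of the induced function (every $m$ and every $r$ admit some $p\in m$ satisfying the surrogate), and that in turn requires arguing that for finitely many $n$ one can find $p\in m$ with $N_p$ trapped between suitable $U^{(n)}_k$-levels. It can be pushed through, but it is not the one-liner your last paragraph suggests.

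The paper avoids all of this with a single observation: replace ``$N_p\subseteq B^d_r(a)$'' by the condition
\[
\mathrm{diam}(p)\ :=\ \sup\{\,d(a',b')\ :\ a',b'\in A\cap N_p\,\}\ <\ r
\quad\text{and}\quad a\in A\cap N_p.
\]
Since $A$ is countable and $d$ on $A\times A$ is arithmetic, $\mathrm{diam}(p)<r$ is \emph{manifestly} arithmetic---no unwinding of $d$, no lattice gymnastics. Density of $A$ in $(X,d)$ then gives $d(x,a)\le\mathrm{diam}(p)<r$ for every $x\in N_p$, so this surrogate still implies $N_p\subseteq B^d_r(a)$; and totality is immediate because the $N_p$ form a $d$-neighborhood basis, so some $p\in m$ has $\mathrm{diam}(p)<r$. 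In short: same architecture, but the paper routes the one hard step through the countable dense set rather than through the lattice, which makes the $\text{ACA}_0$ bound transparent.
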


\begin{proof}
Define the metric $d$ on $X$ with Proposition \ref{metriz}.
Let $A$ be a countable dense subset of $X$.
For each $p\in P$, 
\[ diam(p):= \sup \{d(a,b) \vert a,b\in A\cap N_p\}.\]
Let $H$ be the set of all $\langle p,\langle a,r\rangle \rangle $ such that $diam(p)<r$ and $a\in N_p$.

Consider $m$ a maximal filter in $P$.
We first find a $\langle m_i,\langle a_i,r_i\rangle \rangle \in H$ with $r_i\to 0$.
Let $b_i$ be a sequence of points in $A$ that converge to $m$.
Because $X$ is metrizable, there is some $p\in m$ such that $diam(p)\leq 2^{-n-1}$.
Let $b_{i(n)}$ be such that $b_{i(n)}\in N_p$.
This yields that $\langle m_i,\langle b_i,2^{-n}\rangle \rangle \in H$.
As $n$ was arbitrary this gives the needed sequence.

Now consider two distinct  $\langle m_i,\langle a_i,r_i\rangle \rangle ,\langle m_i',\langle a_i',r_i'\rangle \rangle \in H$.
Say that $r_i\to 0$ and $r_i'\to 0$ and they are both decreasing without loss of generality.
By definition, $d(a_i, m)<{r_i}$ and $d(a_i', m)<{r_i'}$.
This means that for all $j\geq i$ $d(a_i, a_j)<2{r_i}$ and $d(a_i', a_j')<2{r_i'}$.
Or, what is the same, the sequences are Cauchy.
Furthermore, $d(a_i,a_i')< {r_i+r_i'}$ and therefore tends to $0$ too.
This means that the sequences $a_i$ and $a_i'$ must be in the same equivalence class as desired.

It is clear that the $h$ that $H$ codes is an isometry because it is an isometry on the dense set $A$ shared between the spaces.
As $h$ is an isometry, it is open on its dense image.
\end{proof}

We will also use the Paracompactness Lemma from Theorem II.7.2 in \cite{Simbook}.

\begin{lemma}
($\text{RCA}_0$) Given a complete, separable metric space $(A,d)$, there is a computable functional $\Phi$ that, given a sequence of opens $U_i$, returns a point finite refinement $V_i$ 
\end{lemma}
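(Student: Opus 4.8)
The plan is to specialize the classical Stone-type construction of a locally finite (or at least point-finite) refinement of an open cover of a metric space, and to verify that the usual construction is uniformly computable from the cover once a countable dense set and the metric are given. First I would fix an enumeration $a_0,a_1,\dots$ of the dense set $A$ and an enumeration of the rational radii; since the metric $d$ is given, the balls $B(a_n,q)$ with rational radii form a computable basis. Given the sequence of opens $U_i$, each $U_i$ is coded (in the metric-space formalism of $\text{RCA}_0$, as in \cite{Simbook} Chapter II) by a sequence of basic balls whose union is $U_i$; so "$B(a_n,q)\subseteq U_i$" is a $\Sigma^0_1$ predicate in the code, and the whole construction below is carried out relative to those codes.

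The key steps, in order, would be: (1) well-order the index set of the cover (it is already indexed by $\omega$), and for each $i$ define, by the standard recursion on a fixed enumeration of rationals $q_k\to 0$, the sets $V_{i,k}=$ the union of all basic balls $B(a_n,q_k)$ such that the concentric ball of radius $3q_k$ lies in $U_i$, $B(a_n,q_k)$ meets no $V_{j,k'}$ with $(j,k')$ earlier in the enumeration, and $a_n\notin U_j$ for any $j<i$; (2) set $V_i=\bigcup_k V_{i,k}$; each $V_i$ is then an open set, coded computably from the $U_i$, and $\{V_i\}$ refines $\{U_i\}$ and still covers $A$, hence covers $\hat A$ by density and openness. (3) Verify point-finiteness: given $x\in\hat A$, pick $i$ least with $x\in U_i$ and $k$ with $B(x,q_{k-1})\subseteq U_i$; the standard estimate shows $B(x, q_k/2)$ meets $V_{j}$ only for the finitely many pairs with parameter $\geq q_k$, which is a finite set, and this neighborhood together with its finite index list is computed uniformly from $x$'s fast Cauchy name and the cover codes. (4) Package all of this as a single computable functional $\Phi$, observing that each step was a primitive recursive manipulation of the basic-ball codes, so no comprehension beyond $\Delta^0_1$ is used and the argument is available in $\text{RCA}_0$ (the induction needed is only $\Sigma^0_1$ induction along the enumeration of rationals).

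The main obstacle I expect is step (3): making the point-finiteness estimate genuinely effective and proving it works in $\text{RCA}_0$ rather than merely classically. In the classical proof one freely uses that a ball meeting $V_{j,k}$ forces a quantitative relation between the two radii and the $U$'s; transcribing this so that the witnessing neighborhood of $x$ and the finite list of indices it meets are produced by an honest functional — and proving the list is finite using only $\Sigma^0_1$ induction, not a minimization that would need $\Pi^0_1$ bounding — requires some care about which quantities are computed from the fast-Cauchy name of $x$ and the codes of the $U_i$ versus merely known to exist. A secondary nuisance is bookkeeping the well-ordering/priority over pairs $(i,k)$ so that "earlier" is a decidable relation; this is routine but must be spelled out to keep everything $\Delta^0_1$. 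Once these effectivity points are handled, the remaining verifications (refinement, covering, disjointness-type estimates) are exactly the textbook ones and go through verbatim.
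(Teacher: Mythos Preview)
The paper does not actually prove this lemma: it simply introduces it as ``the Paracompactness Lemma from Theorem II.7.2 in \cite{Simbook}'' and states the conclusion, deferring entirely to Simpson's book. So there is no in-paper argument to compare against; your sketch is a reconstruction of Simpson's proof rather than an alternative to anything the author wrote.

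Your outline is the right one --- the Stone-type layered construction is exactly what Simpson carries out --- but one point in your write-up would not go through as stated. You claim the whole recursion is $\Delta^0_1$, yet two of the three defining clauses for $V_{i,k}$ are $\Pi^0_1$ as you have phrased them: ``$B(a_n,q_k)$ meets no $V_{j,k'}$ with $(j,k')$ earlier'' is a universal statement over the (possibly infinite) list of basic balls already placed, and ``$a_n\notin U_j$ for all $j<i$'' is likewise $\Pi^0_1$ in the codes. You cannot form the set of $n$ satisfying a genuine $\Pi^0_1$ condition in $\text{RCA}_0$. Simpson's actual construction avoids this by replacing the set-theoretic separation conditions with explicit metric inequalities between centers and radii (so that the clause becomes a decidable arithmetic comparison rather than a quantifier over an infinite code), and by organizing the recursion so that at each stage only finitely many earlier balls are relevant. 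You do flag effectivity of step (3) as the main obstacle, which is good, but the real pinch point is earlier, in step (1): the defining predicate for $V_{i,k}$ itself must be rewritten in terms of distances, not intersections, before any of the later verification makes sense in $\text{RCA}_0$.
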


\begin{theorem}
($\text{ACA}_0$) Every hybrid MF space is completely metrizable.
\end{theorem}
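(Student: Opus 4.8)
The plan is to follow the classical strategy of Mummert and Simpson: embed the space as a $G_\delta$ subset of a complete separable metric space, then invoke Lemma~\ref{gdelta} to transfer a complete metric back. By Proposition~\ref{metriz} the hybrid MF space $X$ is metrizable, so Lemma~\ref{embed} gives us a separable complete metric space $\hat{A}$ together with an isometric, continuous, open bijection $h$ from $X$ onto a dense subset of $\hat{A}$; working in $\text{ACA}_0$ we may identify $X$ with $h(X)\subseteq \hat{A}$. It then suffices to show that $h(X)$ is a $G_\delta$ subset of $\hat{A}$, i.e. to produce, in $\text{ACA}_0$, a sequence of open sets $(U_i)$ in $\hat{A}$ with $\bigcap_i U_i = h(X)$; Lemma~\ref{gdelta} then furnishes a complete metric on $h(X)$ inducing the subspace topology, and since $A\subseteq h(X)$ is dense and computable, this complete metric space is isomorphic to the original hybrid MF space as required.

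The heart of the argument is the construction of the $G_\delta$ witness. For each $p\in P$ I would use the strong regularity supplied by Lemma~\ref{strongReg} (regularity of $X$ is part of the setup here, exactly as in \cite{MS09}) to consider the family $\langle D_p\mid p\in P\rangle$, and for each $n$ form the open set
\[
U_n \;=\; \bigcup_{p\in P}\Big\{\, y\in\hat{A} \;:\; y\in h(N_q)^{\circ}\text{ for some }q\in D_p,\ diam(p)<2^{-n}\,\Big\},
\]
or more precisely an open neighborhood in $\hat{A}$ of each point of $h(N_q)$ of radius comparable to $2^{-n}$, ranging over $q\in D_p$ with $p$ of small diameter. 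The point of $\bigcap_n U_n$ should be precisely the set of points of $\hat{A}$ that are ``pinned down'' by a shrinking tower of basic opens closed under $\text{cl}$ — and by the arithmetic characterization of maximal filters in Lemma~\ref{arithPoint}, such a tower determines a genuine maximal filter of $P$, hence a point of $X$. Conversely every point of $h(X)$ lies in every $U_n$ because metrizability gives arbitrarily small-diameter basic opens around it, and strong regularity lets us interpolate a closure-contained one. All of this is arithmetic in the given data: the $D_p$, the diameters, and the metric $d$ are all available in $\text{ACA}_0$, so the sequence $(U_n)$ is a single real.

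The Paracompactness Lemma enters to make the sets $U_n$ genuinely usable: rather than taking the crude union above, I would refine the open cover $\{h(N_q) : q\in D_p,\ p\in P\}$ of (a subset of) $\hat A$ to a point-finite refinement via $\Phi$, so that membership in $U_n$ can be decided by examining only finitely many pieces near any point, keeping the definition arithmetic and the verification that $\bigcap_n U_n = h(X)$ clean. \textbf{The main obstacle} I anticipate is proving the inclusion $\bigcap_n U_n \subseteq h(X)$ — that is, showing that a point of $\hat{A}$ meeting every $U_n$ actually arises from a maximal filter and not merely from a Cauchy sequence that ``escapes'' the space. This is exactly where the hybrid formalism pays off: in the original MF setting this step is where $\Pi^1_1$-difficulty in recognizing maximal filters forces high complexity, but here Lemma~\ref{arithPoint} gives an arithmetic test for being a maximal filter, so the shrinking tower of closure-contained basic opens extracted from the $U_n$ can be verified to generate a maximal filter within $\text{ACA}_0$. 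Once that inclusion is in hand, the reverse inclusion and the application of Lemmas~\ref{gdelta} and~\ref{embed} are routine.
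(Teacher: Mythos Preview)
Your overall plan matches the paper's: embed $X$ isometrically into $\hat{A}$ via Lemma~\ref{embed}, exhibit $h(X)$ as a $G_\delta$ in $\hat{A}$, and finish with Lemma~\ref{gdelta}. You also correctly locate the hard inclusion $\bigcap_n U_n\subseteq h(X)$ as the crux and correctly bring in strong regularity and paracompactness. But there is a genuine gap in how you obtain the ``shrinking tower'' from a point $z\in\bigcap_n U_n$. With your $U_n$ defined independently at each level, membership of $z$ in each $U_n$ produces some $q_n$ of small diameter near $z$, but nothing forces these $q_n$ to be $\leq_P$-nested or even pairwise compatible in $P$, so their upward closure need not be a filter at all. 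The paper solves this by building not a sequence of independent open sets but a \emph{tree} $T$ of finite sequences $(W_0,q_0,W_1,q_1,\ldots)$ in which the condition $q_{i+1}\leq_P q_i$ is built in, and where each $W_{i+1}$ is an element of the point-finite refinement of the cover of $W_i$ by the admissible $N_{q_i}$; the open set $C_i$ is then the union of level $2i+1$ of $T$. For $z\in\bigcap_i C_i$ one forms the subtree $T_z$ of sequences all of whose $W$-components contain $z$, and the point-finite refinement is what makes $T_z$ finitely branching, so that K\"onig's Lemma (available in $\text{ACA}_0$) extracts an infinite path; it is this path that supplies the coherent descending chain $(q_i)$. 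You invoke paracompactness only to keep definitions arithmetic, but its real job here is to enable this K\"onig's Lemma step---without it the extraction you need does not go through.

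A secondary point: Lemma~\ref{arithPoint} is not what certifies that the resulting $m=\text{ucl}((q_i))$ is maximal. The paper argues directly from strong regularity and the metric: if $r\notin m$, pick $c\in D_r$ meeting every $q_i$; then $\varepsilon=d(\overline{N_c},\overline{N_{r}^c})>0$, and once $\text{diam}(q_i)<\varepsilon$ one gets $q_i\leq_L r$, contradicting $r\notin m$. So the hybrid formalism pays off here through Lemma~\ref{strongReg} (arithmetic access to the $D_p$'s and to the relation $\text{cl}(N_q)\subseteq N_p$), not through Lemma~\ref{arithPoint}.
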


\begin{proof}
We wish to construct a sequence of open subsets of $\hat{A}$ that witness that $\text{im}(h)$ from Lemma \ref{embed} is $G_\delta$.
From here we will appeal to Lemma \ref{gdelta} to complete the proof.

By our previous results, we obtain a metric $d$ on $X$ that induces the topology, along with a set of witnesses $R_p$ for $p\in P$ for the strong regularity of $X$.
We construct countable sequences of open sets and will form a tree of these sequences.
The odd entries in the sequences, denoted $W_i$, will be themselves sequences of balls in the metric topology centered on $a\in A$.
The even entries will be elements $q_i\in P$ that represent basic open sets in the MF topology.
In this notation, the sequences will look like $(W_0,q_0,\cdots, W_{n}, q_n)$ or $(W_0,q_0,\cdots, q_n, W_{n+1})$.
To define the precise tree we are working with, we put the following restrictions on the sequences:
\begin{enumerate}
	\item For every $i$ and $q_i$, there is a basic open ball $B$ enumerated in $W_i$ such that $N_q\subseteq B$.
	\item For every $i$, $q_{i+1}\leq_Pq_i$.
	\item For every $i$, $\text{diam}(q_i)<2^{-i}$.
	\item $W_0$ is an element of the point finite refinement of the canonical ${B(a,1)}_{a\in A}$ covering of $X$.
	\item $W_{i+1}$ is an element of the point finite refinement of the cover of the element $W_i$ given by the set of $N_{q_i}$ such that the $q_i$ satisfy Conditions 1-3.
\end{enumerate}

It is clear that Conditions 1-4 are arithmetic.
Condition 5 is also arithmetic, but requires more explanation.
In the statement of Condition 5, it is implicitly assumed that the set of $N_{q_i}$ satisfying  Conditions 1-3 cover $W_i$.
We start by showing this.
Consider a point $x\in W_i$.
Take a point $a\in A$ with $d(x,a)<2^{-i}$.
The set  $B(a,2^{-i})\cap W_i$ is open, and is therefore a union of $N_r$ for $r\in P$.
In particular, for some $c$ with $N_{c}\subseteq B(a,2^{-i})\cap W_i$, $x\in N_{c}$.
If $i>0$, then the $W_i$ came from a point finite refinement of $q_{i-1}$, so, $W_i\subseteq N_{q_{i-1}}$.
This means that, considering $x$ as a filter, $q_{i-1}, c\in x$.
Take $q_i$ to be a common extension of $q_{i-1}$ and $c$.
It can now be see that $x\in N_{q_i}$ with all of the desired properties.

Strictly speaking, we can only apply the paracompactness functional to balls in the space, not the opens given by the maximal filter space.
That being said, we can obtain a description for $N_q$ in terms of balls arithmetically, which fixes this issue.
In particular, given $N_q$, let $$U_q=\{B\vert \exists p\in R_q ~~ B\cap A\subseteq N_p\cap A\}.$$
Mummert shows that $N_q = \bigcup_{B\in U_q} B$ in Lemma 4.3 of \cite{MS09}.
In terms of reverse mathematics, when we say "the point finite refinement of the cover of the element $W_i$ given by the set of $N_{q_i}$ such that..." we precisely mean "the point finite refinement of the cover of the element $W_i$ given by the set of $U_{q_i}$ such that..."

With this, it should be clear that the above conditions are well defined and, in fact, arithmetic.
Because the satisfaction of these properties is determined locally by looking at the preceding elements in the finite sequence, it is clear that the sequences satisfying the conditions form a countable tree definable in $\text{ACA}_0$.
We call this tree $T$.
We now define the open sets $C_i$ as the union of the $2i+1$st level of $T$.
We claim that $\text{im}(h)=\bigcap_i C_i$.

We first show that $\text{im}(h)\subseteq\bigcap_i C_i$.
Given $x\in\text{im}(h)$, we show that $\forall i  ~ x\in C_i$ by induction.
$C_0$ is a cover by definition so it contains $x$.
By the argument above, if $x\in C_i$ because $x\in W_i$, there is an extension $q_i$ with $x\in N_{q_i}$.
Furthermore, because the paracompactness functional maintains covers, $x$ must be in some $W_{i+1}$.
Therefore, $x\in C_{i+1}$ as desired.

We now show that $\bigcap_i C_i\subseteq \text{im}(h)$.
Fix $z\in \bigcap_i C_i$.
We form the tree, $T_z$, which contains all sequences $(W_0,\cdots, W_n)$ that are odd projections from elements of $T$ with $x\in W_n$.
$T_z$ is infinite as there are arbitrarily long paths, because $z\in C_i$ for all $i$.
However, $T_z$ is finitely branching.
By construction, once you fix $W_i$ the possible extensions form a point finite cover, so only finitely many of them contain $z$.
This means that only finitely many of these extensions are in $T_z$, as we claimed. 
By Konig's Lemma there is an infinite path through the tree $(W_i)_{i\in\omega}$.
By definition, $z\in \bigcap_i W_i$.
This corresponds to a sequence $q_i$ in the even projection from the tree $T$ with $z\in \bigcap_i q_i$.
Note that the $q_i$ are arbitrarily small, so $\bigcap_i q_i$ has only one point.
Now consider $m=\text{ucl}_L((q_i)_{i\in \omega})\subseteq P$.
It is not difficult to see that $m$ is a filter.
We now show that it is maximal.
If it was not maximal, there would be an $r\in P - m$ with $c\in D_r$ such that for all $i$ $c\wedge q_i\neq\emptyset$.
Let $\varepsilon=d(\bar{N_c},\bar{N_r^c})$, positive by regularity.
Take $i$ with $2^{-i}<\varepsilon$ and note that $q_i\leq_L r$.
Therefore, $r\in m$, a contradiction.
This gives that the image of $m$ under $h$ must be $z$.
Therefore, $z\in\text{im}(h)$, as desired.

Lemma \ref{gdelta} completes the proof.
\end{proof}

\section{Some Complexity Calculations and Future Work}

We define a map which demonstrates a few interesting complexity calculations.
These calculations suggest directions for future work.

\begin{definition}
Given a tree $\T\subseteq \mathbb{N}^{<\mathbb{N}}$ we define $\Phi(\T)\subseteq (\mathbb{N}\cup \{*\})^{<\mathbb{N}}$ where $*$ is a symbol not among $\mathbb{N}$.
We take $\Phi(\T) = \{\sigma^{\frown}* \vert \sigma\in\T\} \cup \T$.
\end{definition}

Note that given a tree, we can understand it as a partial ordering via the inclusion relation.
In particular, every tree also codes an MF space.
Furthermore, in $\text{ACA}_0$ this MF space can be extended to a hybrid MF space.
It is easy to confirm $\leq_P$ and $\leq_L$ coincide as the maximal filters are the maximal paths through the tree and the points in the tree just contain each path that includes them.
One can then extend the structure to include $0,1,\wedge,\vee,^c$ in a direct way.
In what follows, we consider trees as a hybrid MF space in the manner described above.

\begin{lemma}\label{Treg}
Every $\T\subseteq \mathbb{N}^{<\mathbb{N}}$ represents a regular topology.
\end{lemma}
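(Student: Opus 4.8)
The goal is to show that the hybrid MF space structure placed on an arbitrary tree $\T \subseteq \mathbb{N}^{<\mathbb{N}}$ (via inclusion, as described above) is regular. Recall that regularity asks: for every maximal filter $x$ (i.e., every maximal path through $\T$) and every basic open $N_\sigma$ containing $x$, there is a basic open $N_\tau$ with $x \in N_\tau$ and $\mathrm{cl}(N_\tau) \subseteq N_\sigma$. The key structural observation is that in a tree ordered by inclusion, the basic opens $N_\sigma$ behave like a basis of clopen sets: a maximal path is in $N_\sigma$ exactly when $\sigma$ lies on that path, so two basic opens $N_\sigma$, $N_\tau$ are either nested (when $\sigma,\tau$ are comparable) or disjoint (when they are incomparable).

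First I would unwind what $\mathrm{cl}(N_\tau)$ is in this setting. Using the criterion recorded in the background section — $m \in \mathrm{cl}(N_U)$ iff every $q \in m$ has a common extension with some $p \in U$ — I would check that for a single basic open $N_\tau$, a maximal path $m$ lies in $\mathrm{cl}(N_\tau)$ iff every node on $m$ is comparable with $\tau$, which (since the nodes on $m$ form a chain cofinal in $m$) happens iff $\tau$ lies on $m$, i.e. iff $m \in N_\tau$. Hence $N_\tau$ is already closed, so $\mathrm{cl}(N_\tau) = N_\tau$. This reduces regularity to the trivial statement: given $x \in N_\sigma$, take $\tau = \sigma$; then $x \in N_\sigma$ and $\mathrm{cl}(N_\sigma) = N_\sigma \subseteq N_\sigma$.

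The one point that requires a little care is the edge case built into the map $\Phi$ and, more generally, whether the relevant node $\sigma$ actually witnesses membership: if $x$ is a maximal path and $x \in N_\sigma$ then by definition $\sigma \in x$, so $\sigma$ is a genuine node lying on $x$, and the argument above applies verbatim. I should also make sure the common-extension analysis handles the possibility that a node $q$ on $m$ is comparable with $\tau$ but their join is not itself a node of $\T$ — but in a tree, if $q$ and $\tau$ are comparable then one contains the other, so the larger of the two is already in $\T$ and serves as the common extension; if they are incomparable, no common extension exists in $\T$, which is exactly what forces $m \notin \mathrm{cl}(N_\tau)$ when $\tau \notin m$.

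**Main obstacle.** There is essentially no obstacle of substance here; the content is entirely in the observation that inclusion-ordered trees give clopen bases, so every basic open is its own closure. The only thing to be careful about is bookkeeping: confirming that the closure computation uses the right notion (closure of $N_\tau$ as an open set coded by the singleton $\{\tau\}$, not anything more elaborate) and that nothing in the passage to the hybrid structure — the addition of $0,1,\wedge,\vee,{}^c$ — disturbs the identity $\leq_P\ =\ \leq_L$ on $P$, which is already asserted in the discussion preceding the lemma. Once those are in place the proof is two lines.
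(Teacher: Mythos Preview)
Your proposal is correct and reaches the same conclusion as the paper: the basic opens $N_\sigma$ in a tree are clopen, so regularity is witnessed trivially by taking $\tau=\sigma$. The only difference is packaging. The paper observes that a tree under inclusion is a \emph{proper} MF space (meets of comparable nodes lie in $P$) and then invokes Proposition~\ref{proper} to conclude the basis is clopen; you instead compute $\mathrm{cl}(N_\tau)=N_\tau$ directly from the comparability structure of the tree. Your route is slightly more self-contained, while the paper's route ties the lemma back to the earlier structural result about proper MF spaces; substantively they are the same argument.
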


\begin{proof}
It was observed above that $\leq_P$ and $\leq_L$ coincide in the topology of $\T$.
Furthermore, it can be confirmed that given $\sigma,\tau\in \T$ their meet is the longest common initial segment of $\sigma$ and $\tau$.
The meet is of $\sigma$ and $\tau$ are then inside of the poset $P$.
This means that $\T$ represents a proper MF space.
By Proposition \ref{proper}, this means that the basis is clopen.

In other words for $x\in N_p$, it is enough to note that $\text{Cl}(N_p)\subseteq N_p$ to see that there is a witness to regularity for the pair $x,p$ (namely $p$ itself).
\end{proof}

\begin{proposition}
The set of discrete, regular hybrid MF spaces is $\mathbf{\Pi}_1^1$ complete.
\end{proposition}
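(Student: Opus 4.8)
The plan is to prove the two halves of completeness separately: that the set belongs to $\Pi_1^1$, and that it is $\Pi_1^1$-hard, with hardness witnessed by the map $\Phi$ just defined.

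For the upper bound I would first use Lemma \ref{Treg}: since every tree codes a regular hybrid MF space, the ``regular'' clause is automatic once spaces are coded by trees, and it suffices to see that discreteness is $\Pi_1^1$. A hybrid MF space is discrete precisely when every maximal filter $m$ is isolated, that is, there is $p\in P$ with $p\in m$ and $N_p=\{m\}$. By Lemma \ref{arithPoint} the predicate $MaxFilt$ is arithmetic on a regular hybrid MF space --- uniformly, via the strong-regularity witnesses produced by Lemma \ref{strongReg}, which are arithmetic in the presentation --- so discreteness may be written
\[
\forall m\Big(MaxFilt(m)\to\exists p\in P\,\big(p\in m\wedge\forall m'\,(MaxFilt(m')\wedge p\in m'\to m=m')\big)\Big).
\]
Here $\exists p$ is only a number quantifier, since $P$ is countable; the inner $\forall m'$ over an arithmetic matrix is $\Pi_1^1$; and the outer $\forall m$ preserves $\Pi_1^1$. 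Hence ``discrete and regular'' is $\Pi_1^1$. (Alternatively, the failure of discreteness is directly $\Sigma_1^1$: there is an infinite branch $b$ of the tree such that above every initial segment of $b$ there is a node not lying on $b$.)

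For hardness I would reduce the $\Pi_1^1$-complete set of well-founded trees to the set of discrete regular hybrid MF spaces by $\T\mapsto\Phi(\T)$, with $\Phi(\T)$ regarded as a hybrid MF space in the manner fixed above. This map is computable, $\Phi(\T)$ is again a tree once its alphabet $\mathbb{N}\cup\{*\}$ is relabelled by $\mathbb{N}$, and so $\Phi(\T)$ is regular by Lemma \ref{Treg}; it therefore remains to show that $\T$ is well-founded iff $\Phi(\T)$ is discrete. The key point is that in $\Phi(\T)$ every node $\sigma\in\T$ acquires the child $\sigma^{\frown}*$, and every such node is terminal --- nothing properly extends $\sigma^{\frown}*$, because $*$ can occur only at the end of a string of $\Phi(\T)$. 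Hence the maximal filters of $\Phi(\T)$ are exactly the finite paths ending at a leaf $\sigma^{\frown}*$, together with the infinite branches of $\T$ (an infinite branch of $\Phi(\T)$ uses no $*$, hence lies in $\T$). Each finite one is isolated, since the basic open $N_{\sigma^{\frown}*}$ is a singleton; but no infinite branch $b$ is isolated, because for every $n$ the leaf $(b\restriction n)^{\frown}*$ lies above $b\restriction n$ and off $b$, so $N_{b\restriction n}$ contains at least $b$ together with the point through $(b\restriction n)^{\frown}*$ and is never a singleton. Thus $\Phi(\T)$ is discrete iff it has no infinite branch iff $\T$ has no infinite branch iff $\T$ is well-founded, which is the desired reduction.

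The one step with real content is this last equivalence, whose substance is the pair of facts that adjoining the terminal nodes $\sigma^{\frown}*$ destroys the isolation of every infinite branch of $\T$ while, by Lemma \ref{Treg}, keeping the space regular. I do not anticipate a genuine obstacle: the only thing to carry out carefully is the identification of the maximal filters of $\Phi(\T)$ with the maximal paths described above, after which both directions of the equivalence and the $\Pi_1^1$ bound are routine.
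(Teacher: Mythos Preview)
Your proposal is correct and follows the paper's approach: the $\Pi_1^1$ upper bound via the arithmetic $MaxFilt$ predicate of Lemma~\ref{arithPoint}, and hardness by the reduction $\T\mapsto\Phi(\T)$ from well-founded trees. The only cosmetic difference is that you phrase discreteness as ``every point is isolated'' whereas the paper writes it as ``no sequence of points has a limit''; your opening invocation of Lemma~\ref{Treg} for the upper bound is misplaced (that lemma is what makes the \emph{hardness} reduction land in regular spaces), but the $\Pi_1^1$ formula you then produce is the right one.
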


\begin{proof}
First we note that there is a $\mathbf{\Pi}_1^1$ definition.
Being discrete for a regular space is equivalent to saying there are no limit points; in other words, no sequence of points converges to any particular point.
This can be expressed as
\[ \forall X, \{Y_i\}_{i\in\omega} \big(\bigwwedge MaxFilt(Y_i) \land MaxFilt(X)\big) \to \exists p\in P ~ p\in X \land \bigwwedge p\not\in Y_i.\]
By Lemma \ref{arithPoint} , $MaxFilt(X)$ can be expressed as an arithmetic formula, so the above expression is $\Pi_1^1$ in $P$ as desired.

We now note that $\Phi(\T)$ is discrete if and only if $\T$ is well-founded.
Note that if $\T$ is well-founded, so is $\Phi(\T)$.
In any well-founded tree the maximal filters are finite, and are therefore isolated by the unique maximal string in the filter.
Therefore, any well-founded tree gives rise to a discrete space.
If $\T$ is not well-founded let $A$ be a path in $T$.
It is straightforward to confirm that $A$ is a limit point of $(A\restrict n)^\frown *$, so $\Phi(\T)$ is not discrete.
\end{proof}

The above calculation is made exact by the fact that $MaxFilt(X)$ can be expressed as an arithmetic formula.
It does not yield a similar result with ordinary MF spaces as the given definition of discreteness would be $\Pi_2^1$ in $P$.
This is a good example of how moving to the hybrid formalism can allow for greater expressive power in a way that facilitates calculations above the level of ACA$_0$.
The same comments apply to the example below.

\begin{proposition}
The set of covers of a regular hybrid MF spaces are $\mathbf{\Pi}_1^1$ complete.
\end{proposition}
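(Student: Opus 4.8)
The plan is to mirror the two-part structure of the previous proposition. First I would exhibit a $\mathbf{\Pi}_1^1$ definition of the statement ``$W\subseteq L$ codes a cover of the hybrid MF space $X$''; then I would reduce an existing $\mathbf{\Pi}_1^1$-complete set to it via the same construction $\Phi$ applied to an appropriate family of trees, or by simply instrumenting the discreteness reduction from the preceding proof.

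For the upper bound, recall that $M$ is in the union coded by $W\subseteq L$ iff there is $x\in W$ and $p\in P$ with $x\geq_L p$ and $p\in M$. Hence ``$W$ is a cover'' is the statement
\[
\forall X\ \big(MaxFilt(X) \to \exists x\in W\ \exists p\in P\ (x\geq_L p \wedge p\in X)\big).
\]
By Lemma \ref{arithPoint} the predicate $MaxFilt(X)$ is arithmetic on a regular hybrid MF space, and $\geq_L$ is arithmetic by the defining clauses of a hybrid MF space, so the matrix after the leading $\forall X$ is arithmetic; the whole formula is therefore $\mathbf{\Pi}_1^1$ in the parameters $P,L,W$. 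This is the step where the hybrid formalism does real work: with ordinary MF spaces $MaxFilt(X)$ is only $\mathbf{\Pi}_1^1$, which would push the definition to $\mathbf{\Pi}_2^1$, exactly as the remark after the discreteness proposition notes.

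For $\mathbf{\Pi}_1^1$-hardness I would again route through well-foundedness of trees $\T\subseteq\mathbb{N}^{<\mathbb{N}}$. Working inside $\Phi(\T)$ (a regular hybrid MF space by Lemma \ref{Treg}), consider the candidate cover $W$ consisting of all basic opens $N_{\sigma^\frown *}$ for $\sigma\in\T$, i.e. the opens attached to the freshly added $*$-leaves. A maximal filter of $\Phi(\T)$ is either a finite branch ending in some $\sigma^\frown *$ — which lies in the corresponding element of $W$ — or an infinite path $A$ through $\T$, which meets no $N_{\sigma^\frown *}$ since $A$ never contains a node ending in $*$. Thus $W$ covers $\Phi(\T)$ exactly when $\Phi(\T)$ has no infinite branch, i.e. exactly when $\T$ is well-founded. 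Since $\T\mapsto(\Phi(\T),W)$ is computable and well-foundedness of trees is $\mathbf{\Pi}_1^1$-complete, this gives the hardness half.

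The main obstacle I anticipate is bookkeeping rather than conceptual: I must make sure the reduction map genuinely lands in the class being classified — that $(\Phi(\T),W)$ is always a legitimate regular hybrid MF space with $W$ a coded (real) subset of $L$ — and that the ``cover'' predicate is stated for the right kind of code (subsets of $L$ rather than of $P$). One should also double-check the boundary case of the empty tree and of trees with no leaves at a given level, so that the leaf-opens $N_{\sigma^\frown *}$ really do exhaust the isolated points. Once the coding conventions from the start of this section are pinned down, both inclusions are immediate from the description of maximal filters of $\Phi(\T)$ already used in the discreteness argument.
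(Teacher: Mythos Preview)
Your proposal is correct and follows essentially the same route as the paper: a $\mathbf{\Pi}_1^1$ upper bound via the arithmetic $MaxFilt$ predicate from Lemma~\ref{arithPoint}, and hardness via the reduction $\T\mapsto\Phi(\T)$ with the cover consisting of the $*$-leaf opens, which covers exactly when $\T$ is well-founded. The only cosmetic difference is that you code a cover as a subset $W\subseteq L$ while the paper uses a sequence $\{p_i\}_{i\in\omega}$ of basic opens; your bookkeeping worry about this is well-placed but easily resolved, and the argument is otherwise identical.
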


\begin{proof}
First we note that there is a $\mathbf{\Pi}_1^1$ definition.
To say that a set of opens $\{p_i\}_{i\in\omega}$ covers a space is to say every maximal filter contains one of these points.
This can be expressed as
\[\forall X ~ MaxFilt(X) \to \bigvvee p_i\in X. \]
By Lemma \ref{arithPoint} , $MaxFilt(X)$ can be expressed as an arithmetic formula, so the above expression is $\Pi_1^1$ in $\{p_i\}_{i\in\omega}$ as desired.

We now note that $\{\sigma^{\frown}* \vert \sigma\in\T\}$ covers $\Phi(\T)$ if and only if $\T$ is well-founded.
If $\T$ is well-founded the maximal filters in $\Phi(\T)$ are all maximal finite segments.
By construction, these all end in $*$; i.e. there is a point in the filter among $\{\sigma^{\frown}* \vert \sigma\in\T\}$.
Therefore, $\{\sigma^{\frown}* \vert \sigma\in\T\}$ covers $\Phi(\T)$.
If $\T$ is If $\T$ is not well-founded let $A$ be a path in $T$.
It is straightforward to confirm that $A$ is a point in $\Phi(\T)$ that is not within $\{\sigma^{\frown}* \vert \sigma\in\T\}$.
\end{proof}

This proposition begins to aim at the theory of compactness in which covers play a central role.
The following remains an open and motivating question for further development of this area.

\begin{question}
What is the reverse mathematical strength of the one point compactification theorem?
\end{question}

\end{document}